\newtheorem{theorem}{Theorem}[section]
\newtheorem{proposition}[theorem]{Proposition}
\theoremstyle{definition} \newtheorem{remark}[theorem]{Remark}
\numberwithin{equation}{section}
\def\Ric{{\operatorname{Ric}}} 
\def\gap{{\operatorname{gap}}}
\newcommand\1{\hbox{\kern.375em\vrule height1.57ex depth-.1ex
    width.05em\kern-.375em \rm 1}}
 \newcommand\E{\mathbb{E}}
 \newcommand\R{\mathbb{R}}
\renewcommand\P{\mathbb{P}}
\def\mathpal#1{\mathop{\mathchoice{\text{\rm #1}}%
    {\text{\rm #1}}{\text{\rm #1}}%
    {\text{\rm #1}}}\nolimits} \def\id{{\mathpal{id}}}
\def\OM{\mathpal{O}(M)} 
\def\bolddot{{\displaystyle\boldsymbol{\cdot}}}
\def\partr#1#2{/\!/_{\!#1,#2}^{\phantom{.}}}
\def\partrinv#1#2{/\!/_{\!#1,#2}^{-1}}
 \def\vd{\mathrm{d}} \def\r{\right}
\def\l{\left} \def\e{\operatorname{e}}
  \def\mathpal#1{\mathop{\mathchoice{\text{\rm #1}}%
      {\text{\rm #1}}{\text{\rm #1}}%
      {\text{\rm #1}}}\nolimits} \def\id{{\mathpal{id}}}
\begin{document}

  \title[Spectral gap on Riemannian path space] {Spectral gap on
    Riemannian path space over static and \\ evolving manifolds}

  \author{Li-Juan Cheng\textsuperscript{1,2} and Anton
    Thalmaier\textsuperscript{1}}

  \address{\textsuperscript{1}Mathematics Research Unit, FSTC, University of Luxembourg\\
    6, rue Richard Coudenhove-Kalergi, 1359 Luxembourg, Grand Duchy of
    Luxembourg}
  \address{\textsuperscript{2}Department of Applied Mathematics, Zhejiang University of Technology\\
    Hangzhou 310023, The People's Republic of China}
  \email{lijuan.cheng@uni.lu \text{and} chenglj@zjut.edu.cn}
  \email{anton.thalmaier@uni.lu}

\begin{abstract}
  In this article, we continue the discussion of Fang-Wu (2015) to
  estimate the spectral gap of the Ornstein-Uhlenbeck operator on path
  space over a Riemannian manifold of pinched Ricci curvature. Along
  with explicit estimates we study the short-time asymptotics of the
  spectral gap. The results are then extended to the path space of
  Riemannian manifolds evolving under a geometric flow.  Our paper is
  strongly motivated by Naber's recent work (2015) on characterizing
  bounded Ricci curvature through stochastic analysis on path space.
\end{abstract}

\keywords{Spectral gap, path space, Malliavin Calculus,
  Ornstein-Uhlenbeck operator, Ricci curvature, log-Sobolev
  inequality, geometric flow} \subjclass[2010]{60J60, 58J65, 53C44}
\date{\today}

\maketitle

\section{Introduction}\label{Sect:1}

Suppose $(M,g)$ is a $d$-dimensional complete smooth manifold.
Let $\nabla$ be the Levi-Civita connection and $\Delta$ the Laplacian
on $M$.  Given a $C^1$ vector field $Z$ on $M$, we consider the
Bakry-Emery curvature $\Ric^Z:=\Ric-\nabla Z$ for the so-called Witten
Laplacian $L=\frac{1}{2}(\Delta+Z)$ where $\Ric$ is the Ricci
curvature tensor with respect to $g$.

It is well known that the spectral gap of $L$ can be estimated in
terms of a lower curvature bound $K$, i.e., $$\Ric^Z\geq K$$ for some
constant $K$, see e.g. \cite{CW94,CW97,CGY}.  These results reveal the
close relationship between spectral gap, convergence to equilibrium
and hypercontractivity of the corresponding semigroup. For example,
Poincar\'{e} inequalities and log-Sobolev inequalities which can be
used to characterize the convergence for the semigroup, imply certain
lower bound for the spectral gap.

In this article, we extend this circle of ideas to the Riemannian path
space over $M$ and revisit the problem of estimating the spectral gap
of the Ornstein-Uhlenbeck operator under the following general
curvature condition: there exist constants $k_1$ and $k_2$ such that
$$k_1\leq \Ric^Z\leq k_2.$$

Before moving on, let us briefly summarize some background results on
stochastic analysis on path space over a Riemannian manifold.
Stochastic analysis on path space attracted a lot of attention since
1992 when B.K.~Driver \cite{D92} proved quasi-invariance of the Wiener
measure on the path space over a compact Riemannian manifolds.  A
milestone in the theory is the integration by parts formula (see
e.g. \cite{Bismut,FM93}) for the associated gradient operator induced
by the quasi-invariant flow. This result is a main tool in proving
functional inequalities for the corresponding Dirichlet form, for
instance, the log-Sobolev inequality \cite{AE}; the constant in this
inequality has been estimated in \cite{H97} in terms of curvature
bounds.

Very recently, A.~Naber \cite{Naber} proved that certain log-Sobolev
inequalities and $L^{p}$-inequalities on path space are equivalent to
a constant upper bound for the norm of Ricci curvature on the base
manifold $M$; R.~Haslhofer and A.~Naber \cite{HN} extended these
results to characterize solutions of the Ricci flow, see also
\cite{HN1}.  Inspired by this work, S.~Fang and B.~Wu \cite{FW15} gave
an estimate of the spectral gap under the curvature condition that
there exist two constants $k_1$ and $k_2$ with $k_1+k_2\geq 0$ such
that
$$k_1\leq \Ric^Z\leq k_2.$$
Dealing with the case ``$k_1+k_2< 0$'' however, the same argument may
lead to a loss of information concerning $k_2$. We revisit this topic
in this article.  Our aim is to remove the restriction $k_1+k_2\geq 0$
in the curvature condition and to establish sharper short-time
asymptotics for the spectral gap.

Our methods rely strongly on suitable extensions and generalizations
of recent estimates on Riemannian path space, due to Naber
\cite{Naber}, resp.~ Haslhofer and Naber \cite{HN,HN1}.  This work is
crucial for our arguments, as it allows to characterize bounded Ricci
curvature in terms of stochastic analysis on path space.

We start by briefly introducing the context.  Let $X_t^x$ be a
diffusion process with generator $L$ starting from $X^x_0=x$. We call
$X_t^x$ an $L$-diffusion process.  We assume that $X_t^x$ is
non-explosive.  Let $B_t=(B_t^1,\ldots,B_t^d)$ be a $\R^d$-valued
Brownian motion on a complete filtered probability space
$(\Omega, \{\mathscr{F}_t\}_{t\geq 0},\mathbb{P})$ with the natural
filtration $\{\mathscr{F}_t\}_{t\geq 0}$.  It is well known that the
$L$-diffusion process $X^x_t$ starting from $x$ solves the equation
\begin{align}\label{SDE-X}
  \vd X^x_t=u^x_t\circ \vd B_t+\frac{1}{2}Z(X^x_t)\,\vd
  t,\quad X_0^x=x,
\end{align}
where $u^x_t$ is the horizontal process of $X^x_t$ taking values in
the orthonormal frame bundle $\OM$ over $M$ such that $\pi(u_0^x)=x$.
Furthermore
$$\partr st:=u^x_t\circ (u^x_s)^{-1}\colon{T_{X^x_s}M}\to{T_{X^x_t}M},\quad s\leq t,$$
denotes parallel transport along the paths $r\mapsto X^x_r$.  As
usual, orthonormal frames $u\in\OM$ are identified with isometries
$u\colon\R^d\to T_xM$ where $\pi(u)=x$.

For fixed $T>0$ define $W^T:=C([0,T]; M)$ and let
$$\mathscr{F}C_{0,T}^{\infty}=\left\{W^T\ni\gamma \mapsto f(\gamma_{t_1},\ldots, \gamma_{t_n})\colon\
  n\geq 1,\ 0<t_1<\ldots < t_n \leq T,\ f \in
  C_0^{\infty}(M^n)\right\}$$
be the class of smooth cylindrical functions on $W^T$.  Let
$X_{[0,T]}=\{X_t\colon\ 0\leq t\leq T\}$ for fixed $T>0$. Then, for
$F\in\mathscr{F}C_{0,T}^{\infty}$ with
$F(\gamma)=f(\gamma_{t_1},\ldots, \gamma_{t_n})$, we define the
intrinsic gradient as
$$D_tF(X_{[0,T]}^x)=\sum_{i=1}^{n}\1_{\{t<t_i\}}\,\partrinv t{t_i}\,\nabla_if(X_{t_1}^x,\ldots,X_{t_n}^x), \quad t\in [0,T],$$
where $\nabla_{i}$ denotes the gradient with respect to the $i$-th
component.  The generator $\mathcal{L}$ associated to the Dirichlet
form
$$\mathcal{E}(F,F)=\E\left[\int_0^T|D_tF|^2(X_{[0,T]})\,\vd t\right]$$
is called Ornstein-Uhlenbeck operator.  Let $\gap(\mathcal{L})$ be the
spectral gap of the Ornstein-Uhlenbeck operator $\mathcal{L}$.

In this article, we continue the topic of estimating
gap($\mathcal{L}$) under general lower and upper bounds of the Ricci
curvature.
For the sake of conciseness, let us first introduce some notation: for
constants $K_1$ and $K_2$, define
\begin{align}\label{C-fun}
  C(T,K_1,K_2)=\begin{cases}
    \displaystyle 1+\frac{K_2T}{2}+\frac{K_2^2T^2}{8_{\mathstrut}}, & K_1=0;\\
    \displaystyle (1+\beta)^2-\beta\sqrt{(2+\beta)\left(2+2\beta-\beta\e^{-{K_1T}/{2}}\right)}\e^{-{K_1T}/{4}}, & K_1>0; \\
    \displaystyle
    \frac{1}{2}+\frac{1}{2}\l(1+\beta(1-\e^{-{K_1T}/{2}})\r)^2, &
    K_1<0,
  \end{cases}
\end{align}
where $\beta={K_2}/{K_1}.$

\begin{theorem}\label{th1}
  Let $(M,g)$ be a complete manifold. Assume that
  \begin{align}\label{curvatue-condition}
    k_1\leq \Ric^Z\leq k_2.
  \end{align}
  The following estimate holds:
  \begin{align}\label{Eq:MainEstimate}
    \gap(\mathcal{L})^{-1}\leq C(T, k_1, |k_1|\vee|k_2|)\wedge
    \l[C\l(T,k_1, \frac{k_2-k_1}{2}\r)\times C\l(T,\frac{k_1+k_2}{2}, \frac{|k_1+k_2|}{2}\r)\r].
  \end{align}
\end{theorem}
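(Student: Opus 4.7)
The plan is to combine the Clark-Ocone martingale representation on path space with a sharp pointwise estimate of the representing integrand in terms of the intrinsic gradient $D$. Writing $F - \E F = \int_0^T \langle \beta_s, \vd B_s\rangle$, one has $\mathrm{Var}(F) = \E\int_0^T |\beta_s|^2\, \vd s$. Using the damped parallel transport $Q_{s,t}$ defined by $\partial_t Q_{s,t} = -\frac{1}{2}\Ric^Z_{u_t}\, Q_{s,t}$ with $Q_{s,s} = \id$, a direct computation on cylindrical $F$ (in the spirit of Driver, Naber and Haslhofer-Naber) yields an identity of the form
$$\beta_s = \E\Big[G_s - \frac{1}{2}\int_s^T Q_{s,r}^{\,*}\, \Ric^Z_{u_r}\, G_r\, \vd r\,\Big|\, \mathscr{F}_s\Big], \qquad G_r := u_r^{-1}\, D_rF.$$
Since $\mathcal{E}(F,F) = \E\int_0^T |G_s|^2\, \vd s$, the Poincar\'e inequality then reduces to bounding $\E\int_0^T |\beta_s|^2\, \vd s$ by a constant multiple of $\E\int_0^T |G_s|^2\, \vd s$.

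For the first bound $C(T,k_1,|k_1|\vee|k_2|)$, I would use the operator bound $\|\Ric^Z\| \leq |k_1|\vee|k_2|$ together with the damping estimate $\|Q_{s,r}\| \leq \e^{-k_1(r-s)/2}$ (which follows from $\Ric^Z \geq k_1$). Taking absolute values above, squaring, applying Cauchy-Schwarz to the $r$-integral with a weight $\e^{-\lambda(r-s)}$, and integrating $s$ over $[0,T]$ leaves a free parameter $\lambda$; optimizing in $\lambda$ reproduces precisely the three-case formula \eqref{C-fun} according to the sign of $k_1$, with the choice $\beta = K_2/K_1$ identifying the optimal weight.

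For the multiplicative bound, I would decompose
$$\Ric^Z = \frac{k_1+k_2}{2}\, \id + \widetilde{\Ric}, \qquad \l|\widetilde{\Ric}\r| \leq \frac{k_2-k_1}{2},$$
which factorizes the damped transport as $Q_{s,t} = \e^{-\frac{k_1+k_2}{4}(t-s)}\, \widetilde{Q}_{s,t}$. Applying the preceding argument twice -- first to the scalar ``drift'' part (with $(k_1+k_2)/2$ as lower bound and $|k_1+k_2|/2$ as symmetric norm bound), then to the residual $\widetilde{\Ric}$ (with lower bound $k_1$ and norm bound $(k_2-k_1)/2$) -- and multiplying the two resulting constants produces the second term in \eqref{Eq:MainEstimate}. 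Taking the minimum with the first bound finishes the argument.

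The main obstacle is handling the regime $k_1 + k_2 < 0$, where the direct estimate of Fang-Wu becomes suboptimal: squaring the expression for $\beta_s$ produces cross terms whose sign cannot be exploited when one uses $\|\Ric^Z\|$ alone. The decomposition above is designed precisely to separate the problematic ``mean curvature'' drift from the two-sided bounded perturbation, so that the weighted Cauchy-Schwarz inequality can be applied sharply in each factor independently. A secondary technical point is carrying out the optimization in $\lambda$ carefully enough to reproduce the explicit closed-form constants of \eqref{C-fun}, in particular the square-root expression appearing in the $K_1>0$ case.
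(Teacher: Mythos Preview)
Your approach is essentially the paper's: the Clark--Ocone integrand is exactly (the conditional expectation of) the damped gradient $\tilde D_tF$, and the paper proceeds by bounding $\E\int_0^T|\tilde D_tF|^2\,\vd t$ against $\E\int_0^T|D_tF|^2\,\vd t$ via the same two routes you describe---the direct bound using $\|\Ric^Z\|\leq|k_1|\vee|k_2|$, and the two-step route through an intermediate ``modified gradient'' $\hat D_tF$ obtained by splitting off the scalar part $\tfrac{k_1+k_2}{2}\id$ (your factorization $Q_{s,t}=\e^{-\frac{k_1+k_2}{4}(t-s)}\tilde Q_{s,t}$).

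One correction on how the explicit constants arise: the formula $C(T,K_1,K_2)$ in~\eqref{C-fun} is \emph{not} the result of optimizing over your weight parameter~$\lambda$. After Cauchy--Schwarz with a weight $c$ and a Fubini swap, one gets $\int_0^T\Lambda^c(t,T,K_1,K_2)\,|D_tF|^2\,\vd t$; the paper then simply \emph{sets $c=0$} and computes $\sup_{t\in[0,T]}\Lambda^{0}(t,T,K_1,K_2)$ by elementary one-variable calculus (Proposition~\ref{prop1}: locating an interior critical point when $K_1>0$, comparing endpoint values otherwise). So the ``secondary technical point'' you anticipate is a maximization over $t$, not over the weight, and the three cases in~\eqref{C-fun} come from the sign of $K_1$ in that calculus; $\beta=K_2/K_1$ is merely shorthand, not an optimal parameter. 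With this adjustment your outline matches the paper's proof.
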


Let us mention that the first bound in
inequality~\eqref{Eq:MainEstimate}, i.e.,
  $$\gap(\mathscr{L})^{-1}\leq C(T,k_1,|k_1|\vee|k_2|),$$
  is due to Fang and Wu~\cite{FW15}.

\begin{remark}
  In explicit terms we may expand the upper bound as follows:
  \begin{align*}
    C(T&, k_1, |k_1|\vee|k_2|)\\
&=\begin{cases}
 \displaystyle      1+\frac{k_2T}{2}+\frac{k_2^{\,2}T^2}{8}, &  k_1=0;\\
  \displaystyle     \l(\gamma+1\r)^2-\gamma  \sqrt{\l(2+\gamma\r)\l(2\gamma+2-\gamma\e^{-{k_1T}/{2}}\r)}\e^{-{k_1T}/{4}},\qquad\qquad &  k_1>0;\\
 \displaystyle      \frac{1}{2}+\frac{1}{2}\l(1+\gamma-\gamma\e^{-{k_1T}/{2}}\r)^2,&  k_1+k_2\geq 0\ \text{ and }\  k_1<0;\\
 \displaystyle      \frac{1^{\mathstrut}}{2}\l(1+\e^{-k_1T}\r), & k_1+k_2<0,
    \end{cases}
  \end{align*}
  and
  \begin{align*}
    C&\l(T,k_1, \frac{k_2-k_1}{2}\r)\times C\l(T,\frac{k_1+k_2}{2}, \frac{|k_1+k_2|}{2}\r)\\
    &=\begin{cases}
  \displaystyle\l(1+\frac{k_2T}{4}+\frac{k_2^{\,2}T^2}{32}\r)\left(4-(12-3\e^{-\frac{k_2T}{4}})^{{1}/{2}}\,\e^{-\frac{k_2T}{8}}\right),
    & k_1=0;\\
\displaystyle      \frac{1}{4}\left\{\l(\gamma+1\r)^2-\l(\gamma-1\r)
        (\gamma+3)^{{1}/{2}}\big(2\gamma+2-(\gamma-1)\e^{-\frac{k_1T}{2}}\big)^{{1}/{2}}\,\e^{-\frac{k_1T}{4}}\right\}\\
\displaystyle       \qquad\times     \left(4-(12-3\e^{-\frac{k_2T}{4}})^{{1}/{2}}\,\e^{-\frac{(k_1+k_2)T}{8}}\right), & k_1>0;\\
\displaystyle            \frac{1}{2}\left\{1+\frac{1}{4}\l(\gamma+1
        -\l(\gamma-1\r)\e^{-\frac{k_1T}{2}}\r)^2\right\}\\
\displaystyle      \qquad\times \left(4-(12-3\e^{-\frac{k_2T}{4}})^{{1}/{2}}\,\e^{-\frac{(k_1+k_2)T}{8}}\right), 
   &   k_1+k_2\geq 0 \ \text{ and } \ k_1<0;\\
      \frac{1}{4}\left\{1+\frac{1}{4}\l(\gamma+1
        -\l(\gamma-1\r)\e^{-\frac{k_1T}{2}}\r)^2\right\} \l(1+\e^{-\frac{k_1+k_2}{2}T}\r),       &k_1+k_2<0,
    \end{cases}
  \end{align*}
  where $\gamma:=k_2/k_1$.
\end{remark}

By means of Theorem \ref{th1} we are now in position to determine the
asymptotic behavior of $\gap(\mathcal{L})$ as~$T$ tends to~$0$.
\goodbreak

\begin{theorem}\label{th2}
  Assume $k_1\leq \Ric^Z\leq k_2$. Then, as $T\rightarrow 0$, the
  following asymptotics hold:
  \begin{itemize}
  \item [(i)] for $k_1\geq0$,
 $$\gap(\mathscr{L})^{-1}\leq 1+\frac{k_2}{2}T+\frac{1}{8}\l(k_2^2-\frac{(7k_1+k_2)(k_1+k_2)k_2}{6(3k_1+k_2)}\r)T^2+{\rm o}(T^2);$$
\item [(ii)] for $k_1+k_2\geq 0$ and $k_1<0$,
$$\gap(\mathscr{L})^{-1}\leq 1+\frac{k_2}{2}T+\frac{1}{8}\l(k_2^{\,2}+\frac{2k_1^2-k_2^2-5k_1k_2}{6}\r)T^2+{\rm o}(T^2);$$
\item [(iii)] for $k_1+k_2<0$,
$$\gap(\mathscr{L})^{-1}\leq 1-\frac{k_1}{2}T+\frac{1}{8}\l(k_1^2+\frac{3k_1^2+k_2^2}{4}\r)T^2+{\rm o}(T^2).$$
\end{itemize}
\end{theorem}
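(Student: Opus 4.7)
The plan is to apply Theorem~\ref{th1} directly and Taylor-expand the two upper bounds appearing in \eqref{Eq:MainEstimate} up to order $T^2$. A short computation shows that $C(0,K_1,K_2)=1$ in all three branches of \eqref{C-fun} (for $K_1>0$ this uses $\sqrt{(2+\beta)(2+\beta)}=2+\beta$), and differentiating once in $T$ gives $\partial_T C(0,K_1,K_2)=K_2/2$ in every branch. Reading off the leading behaviour of each bound in Theorem~\ref{th1} then recovers the stated linear term $(k_2/2)T$ in cases (i)--(ii) and $-(k_1/2)T$ in case (iii).

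Next I would extract the $T^2$ coefficient of $C(T,K_1,K_2)$ branch by branch. The $K_1=0$ case is trivial, and the $K_1<0$ branch, being an elementary composition, yields $K_2(K_2-K_1)/8$ by direct differentiation. The case $K_1>0$ is the most delicate because of the square root; my approach would be to set $h(T)^2=(2+\beta)(2+2\beta-\beta\e^{-K_1T/2})$ and differentiate this identity twice at $T=0$ to obtain $h(0)=2+\beta$, $h'(0)=\beta K_1/4$ and a closed-form value for $h''(0)$. Combining with the expansion of $\e^{-K_1T/4}$ then gives the $T^2$ coefficient $K_2(K_2^2+K_1K_2-K_1^2)/\bigl(8(2K_1+K_2)\bigr)$.

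With these local expansions in hand, I would substitute into the two factors of the product bound in \eqref{Eq:MainEstimate} and multiply out to second order. In case (i) both factors fall in the $K_1\ge 0$ regime, with $K_1=K_2=(k_1+k_2)/2$ (so $\beta=1$) in the second factor; simplification via the identity $6k_2(3k_1+k_2)-(7k_1+k_2)(k_1+k_2)=5k_2^2+10k_1k_2-7k_1^2$ then recovers the advertised $T^2$ coefficient. Case (ii) differs in that the first factor now uses the $K_1<0$ branch, and case (iii) additionally forces the second factor into the $K_1<0$ branch with $\beta=-1$, where it collapses to $(1+\e^{-(k_1+k_2)T/2})/2$.

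Finally I would verify that in each regime the product bound is no larger than $C(T,k_1,|k_1|\vee|k_2|)$, so that the minimum in \eqref{Eq:MainEstimate} is realized by the product and gives the stated asymptotic. This reduces to a polynomial sign check: in (i) the difference of the two $T^2$ coefficients factors as $k_2(k_2-k_1)(k_1+k_2)(k_2+4k_1)/\bigl(48(2k_1+k_2)(3k_1+k_2)\bigr)\ge 0$, in (ii) as $(k_2-2k_1)(k_1+k_2)/48\ge 0$, and in (iii) as $(k_1^2-k_2^2)/32\ge 0$, each manifest under the corresponding hypothesis on $k_1,k_2$. The main obstacle is purely algebraic: six local expansions must be combined without error across the three subcases, and the resulting polynomial identities simplified carefully to match the stated coefficients.
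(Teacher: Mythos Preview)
Your proposal is correct and follows essentially the same approach as the paper: Taylor-expand each branch of $C(T,K_1,K_2)$ to second order, substitute into both bounds of Theorem~\ref{th1}, and compare the $T^2$ coefficients to see that the product bound always wins. Your implicit-differentiation trick for the square root is a minor variant of the paper's direct expansion of $\sqrt{1+\frac{\beta}{2+\beta}(1-\e^{-K_1T/2})}$, but the resulting coefficients and the three sign checks are identical (up to sign convention) to those in the paper.
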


\begin{remark}\label{rem3}
  Note that as $T\rightarrow 0$, up to the first order, the two upper
  bounds in Theorem~\ref{th1} have the same short-time behaviour,
  however when considered up to second order, our estimates provide
  sharper asymptotics (see the proof of Theorem \ref{th2}).  For
  instance, from \cite[Proposition 3.6]{FW15} we know that if
  $k_1\rightarrow 0$, then
  \begin{equation}\label{est-gap-fw}
    \gap(\mathscr{L})^{-1}\leq 1+\frac{k_2}{2}T+\frac{1}{8}k_2^2T^2+{\rm o}(T^2).
  \end{equation}
  In this case, from Theorem \ref{th2} we deduce that
  $$\gap(\mathscr{L})^{-1}\leq 1+\frac{k_2}{2}T+\frac{5}{48}k_2^2T^2+{\rm o}(T^2)$$
  with a smaller coefficient of $T^2$ when compared to
  estimate~\eqref{est-gap-fw}.
\end{remark}

In Section~\ref{Sect:3} below we shall extend these results to the
path space of an evolving manifold $(M,g_t)$.  Stochastic analysis on
evolving manifolds began with an appropriate notion of Brownian motion
on $(M,g_t)$ (called $g_t$-Brownian motion), see~\cite{ACT}.  Since
then there has been a lot of subsequent work, see for instance,
\cite{KR, Ku, Ku3, Naber, CCM, Ch1, Ch2}.  Here, we deal with
diffusions $X_t$ generated by $L_t=\frac{1}{2}(\Delta_t+Z_t)$ which
are assumed to be non-explosive.  The first author \cite{Ch1}
developed a Malliavin calculus on the path space of $X_t$ by means of
an appropriate derivative formula and an integration by parts
formula. Recently, Naber \cite{Naber} characterized solutions to the
Ricci flow in terms of semigroup gradient estimates. Inspired by this
work, we consider in Section~\ref{Sect:3} one-parameter families of
Ornstein-Uhlenbeck type operators on the path space and adapt a known
method to obtain a family of log-Sobolev inequalities and Poincar\'{e}
inequalities on the path space of the $L_t$-diffusion under a modified
pinched curvature condition.  This curvature contains information
about the time derivative of the metric as well.  In the particular
case of the Ricci flow this modified curvature tensor equals to zero.

The rest of the paper is organized as follows.  In the next section we
establish first a log-Sobolev inequality and a Poincar\'e inequality
on Riemannian path space; these inequalities are the tools to
establish our main results of Section~\ref{Sect:1}.  As already
indicated, Section~\ref{Sect:3} is then devoted to the extension of
the results to evolving manifolds under a geometric flow.

\section{Proofs of main results}\label{Sect:2}

To prove the main results, we introduce a two-parameter family
$\{Q_{r,t}\}_{0\leq r<t}$ of multiplicative functionals as follows:
the $Q_{r,t}$ are a random variable taking values in the linear
automorphisms of $T_{X_r^x}M$ satisfying for fixed $r\geq0$ the
pathwise equation:
\begin{align}\label{e:Q-functional}
  \frac{\vd Q_{r,t}}{\vd t}=-\frac12 Q_{r,t}\,\Ric_{\partr rt}^Z, \quad Q_{r,r}=\id,
\end{align}
where
$\Ric_{\partr rt}^Z=\partrinv rt\circ\Ric_{X_t^x}^Z\circ\partr rt$,
see \cite{Hsu02} and \cite[Theorem 4.1.1]{Wbook2}.  As usual,
$\Ric^Z_x$ operates as linear homomorphism on $T_xM$ via
$\Ric^Z_xv=\Ric^Z(\cdot,v)^\sharp$, $v\in T_xM$.

It is easy to see that if $\Ric^Z\geq K$ for some constant $K$, then
for any $0\leq r\leq t< T$,
$$\|Q_{r,t}\|\leq \e^{- K(t-r)/2},\quad \text{a.s.},$$
where $\|\cdot\|$ denotes the operator norm.  The functionals
$Q_{r,t}$ (or the ``damped parallel transport'' defined as
$\partr rt\circ Q_{r,t}$) are well-known ingredients in the stochastic
representation of the heat flow on one-forms and for Bismut-type
derivative formulas for the diffusion semigroup $\{P_{t}\}_{t\geq 0}$,
see~\cite{Bismut,EL}.

On path space a canonical gradient operator is given in terms of
$Q_{r,t}$.  For any $F\in \mathscr{F}C_{0,T}^{\infty}$ with
$F(\gamma)=f(\gamma_{t_1},\ldots, \gamma_{t_n})$, the damped gradient
$\tilde{D}_tF(X_{[0,T]}^x)$ is defined as
$$\tilde{D}_tF(X^x_{[0,T]})=\sum_{i=1}^{n}\1_{\{t<t_i\}}\,Q_{t,t_i}\,\partrinv t{t_i}\,\nabla_if(X_{t_1}^x,\ldots,X_{t_n}^x), \quad
t\in [0,T].$$
By estimating the damped gradient, a log-Sobolev inequality and a
Poincar\'{e} inequality on path space can be obtained.  Let us first
introduce the following function: for any constants $K_1, K_2$ and
$c$,
\begin{align*}
  \Lambda^c(t,T,K_1,K_2):=\beta(t)+\frac{K_2}{2}\int_0^t\beta(s)\e^{-(\frac{1}{2}K_1+c)(t-s)}\,\vd s,
\end{align*}
where
$\beta(t)=1+\frac{K_2}{2}\int_t^T\e^{-(\frac{1}{2}K_1-c)(s-t)}\vd s$.
Define
\begin{align*}
  S(T,K_1,K_2)=\inf_{c\in \R}\sup_{t\in [0,T]}\Lambda^c(t,T,K_1,K_2).
\end{align*}

\begin{theorem}\label{dd}
  Assume $k_1\leq \Ric^Z\leq k_2$.  Let
  \begin{equation}\label{eq-H}
    H(T,k_1,k_2):=
    S(T, k_1, |k_1|\vee|k_2|) \wedge \l[S\l(T, k_1, \frac{k_2-k_1}{2}\r)S\l( T,\frac{k_2+k_1}{2},\frac{|k_2+k_1|}{2}\r)\r].
  \end{equation}
   Then
  for $F\in\mathscr{F}C_{0,T}^{\infty}$, we have
  \begin{itemize}
  \item [(i)]
    $\E[F^2\log F^2]-\E[F^2]\log \E[F^2]\leq
    2H(T,k_1,k_2)\E\int_0^T|D_tF|^2\,\vd t;$
  \item [(ii)]
    $\E\big[F-\E[F]\big]^2\leq H(T,k_1,k_2)\,\E\int_0^T|D_tF|^2\,\vd
    t.$
  \end{itemize}
\end{theorem}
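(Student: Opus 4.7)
The plan is to reduce both inequalities of Theorem~\ref{dd} to a single comparison
$$\E\int_0^T|\tilde D_sF|^2\,\vd s\leq H(T,k_1,k_2)\,\E\int_0^T|D_sF|^2\,\vd s,$$
and then to establish this comparison by two independent arguments whose minimum produces~$H$. For the reduction, consider $F\in\mathscr{F}C_{0,T}^\infty$ and the closed martingale $N_t:=\E[F|\mathscr{F}_t]$. The multiplicative functional $Q_{r,t}$ is constructed precisely so that the Clark--Ocone representation of $N_t$ has integrand $\E[\tilde D_tF\,|\,\mathscr{F}_t]$ (identified with an $\R^d$-valued predictable process via the horizontal frame $u_t^x$). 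Itô's formula applied to $N_t^2$, respectively $N_t^2\log N_t^2$, together with a Cauchy--Schwarz step replacing squared conditional expectations by conditional expectations of squares, yields the standard bounds
$$\E\big[F-\E F\big]^2\leq \E\int_0^T|\tilde D_sF|^2\,\vd s, \qquad \E[F^2\log F^2]-\E[F^2]\log\E[F^2]\leq 2\,\E\int_0^T|\tilde D_sF|^2\,\vd s,$$
so, granted the comparison above, both conclusions of Theorem~\ref{dd} follow at once.

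To compare $\tilde D_sF$ with $D_sF$, the strategy is to transfer the weight carried by the maps $Q_{s,t_i}$ back onto the intrinsic gradient by means of the pathwise equation~\eqref{e:Q-functional}. Introduce a gauge parameter $c\in\R$ and the rescaled functional $\e^{c(t-r)}Q_{r,t}$, which satisfies an ODE with generator $\tfrac12(\Ric^Z-2c\,\id)$. Writing $\tilde D_sF$ through variation of constants as an integral over the corrections produced by this ODE, applying a weighted Cauchy--Schwarz inequality (whose two exponential weights $\e^{-(K_1/2-c)(s-t)}$ and $\e^{-(K_1/2+c)(t-s)}$ assemble exactly into the expression $\Lambda^c(t,T,K_1,K_2)$), and interchanging the order of time integrations gives
$$\int_0^T|\tilde D_sF|^2\,\vd s\leq \sup_{t\in[0,T]}\Lambda^c(t,T,K_1,K_2)\int_0^T|D_sF|^2\,\vd s,$$
where $K_1$ is the effective lower bound on the relevant Ricci operator and $K_2$ dominates the operator norm of the remaining part. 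Infimising over~$c$ produces the factor $S(T,K_1,K_2)$.

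Applied directly with $K_1=k_1$ and $K_2=|k_1|\vee|k_2|$, this scheme yields the first term $S(T,k_1,|k_1|\vee|k_2|)$ of~$H$. For the sharper product bound, split $\Ric^Z$ into its scalar midpoint $\frac{k_1+k_2}{2}\id$ and the deviation $B=\Ric^Z-\frac{k_1+k_2}{2}\id$ (which satisfies $\|B\|\leq\frac{k_2-k_1}{2}$ and retains the lower bound coming from~$k_1$), factor the multiplicative functional $Q_{r,t}$ accordingly, and iterate the argument: the first stage, carrying the deviation part with lower bound~$k_1$ and amplitude $\frac{k_2-k_1}{2}$, contributes the factor $S(T,k_1,\frac{k_2-k_1}{2})$; the second stage, carrying the midpoint curvature $\frac{k_1+k_2}{2}$ with amplitude $\frac{|k_1+k_2|}{2}$, contributes the factor $S(T,\frac{k_1+k_2}{2},\frac{|k_1+k_2|}{2})$. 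Taking the minimum of the two bounds produces $H(T,k_1,k_2)$.

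The delicate step is implementing the factorisation of $Q_{r,t}$ cleanly enough that the iterated weighted Cauchy--Schwarz argument yields a \emph{product} of $S$-factors rather than a sum; this requires keeping careful track of the parallel-transport conjugations between the two sub-multiplicative functionals and verifying that the gauge parameter~$c$ can be optimised independently in each stage. A secondary technical point is the identification $dN_t=\langle\E[\tilde D_tF|\mathscr{F}_t],\vd B_t\rangle$, which relies on the standard Bismut-type derivative formula linking $Q_{r,t}$ to the heat flow on one-forms; this is where the design of the damped parallel transport becomes essential.
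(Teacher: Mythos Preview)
Your proposal is correct and follows essentially the same route as the paper: reduction to the damped-gradient bound, a weighted Cauchy--Schwarz with gauge parameter $c$ followed by Fubini to produce $\Lambda^c$, and a two-stage argument for the product bound. The paper implements your ``factorisation of $Q_{r,t}$'' by introducing the intermediate modified gradient $\hat D_tF=\sum_i\1_{\{t\le t_i\}}\e^{-\frac{k_1+k_2}{4}(t_i-t)}\partrinv t{t_i}\nabla_i f$ (so one factor is scalar and there is no parallel-transport conjugation issue), first bounding $\int_0^T|\tilde D_tF|^2\,\vd t$ by $S(T,k_1,\tfrac{k_2-k_1}{2})\int_0^T|\hat D_tF|^2\,\vd t$ and then $\int_0^T|\hat D_tF|^2\,\vd t$ by $S(T,\tfrac{k_1+k_2}{2},\tfrac{|k_1+k_2|}{2})\int_0^T|D_tF|^2\,\vd t$, with $c$ optimised separately in each step.
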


First, let us introduce some functional inequalities on path space
under pinched curvature condition, which extend the estimates in
\cite{Naber}.  For $F\in \mathscr{F}C_{0,T}^{\infty}$ with
$F(\gamma)=f(\gamma_{t_1},\ldots, \gamma_{t_n})$, we define a modified
gradient as
$$\hat{D}_tF(X_{[0,T]}^x)=\sum_{i=1}^n\1_{\{t\leq t_i\}}\,\e^{-\frac{k_1+k_2}{4}(t_i-t)}\partrinv t{t_i}\, \nabla_if(X_{t_1}^x,\ldots,X_{t_n}^x), \quad
t\in [0,T].$$
In what follows, if there no ambiguity, we write briefly $D_tF$, $\tilde{D}_tF$ and
$\hat{D}_tF$ instead of $D_tF(X_{[0,T]})$, $\tilde{D}_tF(X_{[0,T]})$ and
$\hat{D}_tF(X_{[0,T]})$.

\begin{proposition}\label{lem1}
  Let $(M,g)$ be a complete Riemannian manifold. Let $k_1, k_2$ be two
  real constants such that $k_1\leq k_2$.  The following conditions
  are equivalent:
  \begin{enumerate}[\rm(i)]
  \item $k_1\leq \Ric^Z\leq k_2;$
  \item for any $F\in \mathscr{F}C_{0,T}^{\infty}$,
 $$\big|\nabla_x\E F(X_{[0,T]}^x)\big|\leq \E|\hat{D}_0F|+\frac{k_2-k_1}{4}\int_0^T\e^{-{k_1s}/{2}}\E|\hat{D}_sF|\vd s;$$
\item for any $F\in \mathscr{F}C_{0,T}^{\infty}$ and constant $c$,
    $$\quad\big|\nabla_x\E F(X^x_{[0,T]})\big|^2\leq \l(1+\frac{k_2-k_1}{4}\int_0^T\e^{-(\frac{k_1}{2}-c)s}\vd s\r)\l(\E|\hat{D}_0F|^2+\frac{k_2-k_1}{4}\int_0^T\e^{-(\frac{k_1}{2}+c)s}\E|\hat{D}_sF|^2\vd s\r);$$
  \item for any $F\in \mathscr{F}C_{0,T}^{\infty}$, constant $c$ and
    $t_1<t_2$ in $[0,T]$,
    \begin{align*}
      \quad&\E\Big[\E[F^2(X_{[0,T]})|\mathscr{F}_{t_2}]\log \E[F^2(X_{[0,T]})|\mathscr{F}_{t_2}]\Big]
             -\E\Big[\E[F^2(X_{[0,T]})|\mathscr{F}_{t_1}]\log \E[F^2(X_{[0,T]})|\mathscr{F}_{t_1}]\Big]\\
           &\leq 2\int_{t_1}^{t_2}\l(1+\frac{k_2-k_1}{4}\int_t^T\e^{-(\frac{k_1}{2}-c)(s-t)}\vd s\r)\l(\E|\hat{D}_tF|^2+\frac{k_2-k_1}{4}\int_t^T\e^{-(\frac{k_1}{2}+c)(s-t)}\E|\hat{D}_sF|^2\vd s\r)\vd t;
    \end{align*}
  \item for any $F\in \mathscr{F}C_{0,T}^{\infty}$, constant $c$ and
    $t_1<t_2$ in $[0,T]$,
    \begin{align*}
      \quad&\E\Big[\E[F(X_{[0,T]})|\mathscr{F}_{t_2}]^2\Big]-\E\Big[\E[F(X_{[0,T]})|\mathscr{F}_{t_1}]^2\Big]\\
           &\leq \int_{t_1}^{t_2}\l(1+\frac{k_2-k_1}{4}\int_t^T\e^{-(\frac{k_1}{2}-c)(s-t)}\vd s\r)\l(\E|\hat{D}_tF|^2+\frac{k_2-k_1}{4}\int_t^T\e^{-(\frac{k_1}{2}+c)(s-t)}\E|\hat{D}_sF|^2\vd s\r)\vd t.
    \end{align*}
  \end{enumerate}
\end{proposition}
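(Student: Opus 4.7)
The plan is to establish the equivalence by proving $(\mathrm{i})\Rightarrow(\mathrm{ii})\Rightarrow(\mathrm{iii})$ and $(\mathrm{ii})\Rightarrow(\mathrm{iv}),(\mathrm{v})$, closed by $(\mathrm{v})\Rightarrow(\mathrm{i})$, adapting Naber's \cite{Naber} scheme for bounded $|\Ric^Z|$ to the pinched setting through the centering
$$\Ric^Z = \tfrac{k_1+k_2}{2}\id + R',\qquad \|R'\|\leq \tfrac{k_2-k_1}{2}.$$
The average $(k_1+k_2)/2$ is absorbed into the exponential weight defining $\hat D$, while the off-center part $R'$ plays the role of Naber's norm-bounded Ricci tensor.

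For $(\mathrm{i})\Rightarrow(\mathrm{ii})$ I start from the Bismut-type integration-by-parts identity
$$\nabla_x\E F(X^x_{[0,T]}) = \E\tilde D_0F = \E\sum_i Q_{0,t_i}\,\partrinv 0{t_i}\nabla_if(X_{t_1},\ldots,X_{t_n}),$$
obtained by iterating $\nabla P_tf(x)=\E[Q_{0,t}\partrinv 0t\nabla f(X_t)]$ and using the multiplicative property of $Q$. Setting $\phi(s):=\e^{(k_1+k_2)s/4}Q_{0,s}$, whose ODE $\phi'(s)=\phi(s)\l(\tfrac{k_1+k_2}{4}\id-\tfrac12\Ric_{\partr 0s}^Z\r)$ has bracket of operator norm $\leq(k_2-k_1)/4$, and integrating yields the pointwise decomposition
$$Q_{0,t_i} = \e^{-(k_1+k_2)t_i/4}\id + \int_0^{t_i}\e^{-(k_1+k_2)(t_i-s)/4}Q_{0,s}\l(\tfrac{k_1+k_2}{4}\id - \tfrac12\Ric_{\partr 0s}^Z\r)\vd s.$$
Substituting into the Bismut formula and using $\partrinv 0{t_i}=\partrinv 0s\circ\partrinv s{t_i}$ reorganizes the double sum so that the bracketed inner sum equals $\partrinv 0s\hat D_sF$; taking norms, with $\|Q_{0,s}\|\leq\e^{-k_1s/2}$ by $(\mathrm{i})$, produces $(\mathrm{ii})$. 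The step $(\mathrm{ii})\Rightarrow(\mathrm{iii})$ then follows from Cauchy-Schwarz on $\{0\}\cup[0,T]$ with the splitting $\e^{-k_1s/2}=\e^{-(k_1/2-c)s/2}\e^{-(k_1/2+c)s/2}$, together with Jensen's inequality.

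For $(\mathrm{ii})\Rightarrow(\mathrm{iv}),(\mathrm{v})$, set $M_t:=\E[F^2|\mathscr{F}_t]$ and $N_t:=\E[F|\mathscr{F}_t]$; by the Markov property these are conditionally cylindrical functionals on the shifted horizon $[t,T]$ starting from $X_t$. Martingale representation gives $\vd\langle M\rangle_t = |\xi^M_t|^2\vd t$ with $\xi^M_t$ identified with the path-space spatial gradient of $M_t$ in its starting-point argument. Applying $(\mathrm{ii})$ to $F^2$ on the shifted interval, using the chain rule $\hat D_sF^2=2F\hat D_sF$ and the conditional Cauchy-Schwarz $\E[|F||\hat D_sF|]\leq(\E F^2)^{1/2}(\E|\hat D_sF|^2)^{1/2}$, pulls out a $\sqrt{M_t}$ factor; squaring and applying the weighted Cauchy-Schwarz of Step 2 bounds $|\xi^M_t|^2/M_t$ by the integrand appearing on the right-hand side of $(\mathrm{iv})$. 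Substituting into the It\^o-entropy identity
$$\E[M_{t_2}\log M_{t_2}]-\E[M_{t_1}\log M_{t_1}]=\tfrac12\E\int_{t_1}^{t_2}\frac{\vd\langle M\rangle_t}{M_t}$$
yields $(\mathrm{iv})$. The analogous (simpler) computation for $N_t$ applies $(\mathrm{ii})$ directly to $F$, without the $\sqrt{M_t}$ trick, and uses $\E N_{t_2}^2-\E N_{t_1}^2=\E\int_{t_1}^{t_2}\vd\langle N\rangle_t$ to produce $(\mathrm{v})$. Finally, for $(\mathrm{v})\Rightarrow(\mathrm{i})$, I test with $F=h(X_{t_0})$ where $\nabla h(x)=v$, and expand both sides as $t_0\to 0$: the leading-order coefficients recover $k_1|v|^2\leq\Ric^Z(v,v)\leq k_2|v|^2$ at arbitrary $x$ and $v\in T_xM$, closing the cycle.

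The main obstacle I expect is Step 1: the algebraic identification, after substituting the pointwise decomposition of $Q_{0,t_i}$ into the iterated Bismut formula, of the resulting inner sum as precisely $\partrinv 0s\hat D_sF$. This identification encodes the pinching symmetry and is the reason the modified gradient $\hat D$ (based on the average curvature $(k_1+k_2)/2$), rather than the damped gradient $\tilde D$, arises on the right-hand side of $(\mathrm{ii})$. A secondary technical point is the cancellation of $\sqrt{M_t}$ in Step 3 against the $1/M_t$ in the It\^o-entropy identity, which requires combining the Cauchy-Schwarz $\E[|F||\hat DF|]\leq(\E F^2)^{1/2}(\E|\hat DF|^2)^{1/2}$ with the weighted splitting of Step 2 in a specific order so that the final bound is in terms of $\E|\hat D_tF|^2$ rather than a cross-moment of $F^2$ and $|\hat D F|^2$.
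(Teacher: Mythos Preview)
Your forward direction $(\mathrm{i})\Rightarrow(\mathrm{ii})$ via the centering $\Ric^Z=\tfrac{k_1+k_2}{2}\id+R'$, $\|R'\|\leq\tfrac{k_2-k_1}{2}$, is exactly the paper's device. The paper carries it out as a pointwise bound on $|\tilde D_tF|$ (not only at $t=0$), and then feeds this bound into already-known log-Sobolev and Poincar\'e inequalities expressed in terms of the damped gradient $\tilde D$ (cited from \cite{ELL,Wbook2}) to get $(\mathrm{iv})$ and $(\mathrm{v})$ in one line. Your martingale/It\^o-entropy route to $(\mathrm{iv}),(\mathrm{v})$ should also work, but it reproves those cited results rather than invoking them.

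The genuine gap is your Step~4. Testing $(\mathrm{v})$ with a single one-time functional $F=h(X_{t_0})$ recovers only the \emph{lower} bound $\Ric^Z\geq k_1$. For such $F$ the left side of $(\mathrm{v})$ reduces to the local variance $P_{t_0}h^2-(P_{t_0}h)^2$, which \emph{decreases} as the Ricci curvature increases; thus if the true curvature exceeds $k_2$ the inequality $(\mathrm{v})$ becomes easier, not harder, to satisfy, and no contradiction arises. The exponential weight $\e^{-(k_1+k_2)(t_0-t)/4}$ built into $\hat D$ does not rescue this: to first order in $t_0$ it contributes nothing, and at second order it only shifts the constant without reversing the direction of comparison. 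The paper (following \cite{Naber}) instead closes the cycle from $(\mathrm{iii})$ using a \emph{second} test functional $F=f(x)-\tfrac12 f(X_t)$: the constant-in-path term $f(x)$ has $\hat D_0$-gradient $\nabla f(x)$ but vanishing $\hat D_s$-gradient for $s>0$, and the resulting cross-terms $\langle\nabla f(x),\nabla P_tf(x)\rangle$ and $\langle\nabla f(x),\E\,\partrinv0t\nabla f(X_t)\rangle$ in the expansion of $(\mathrm{iii})$ are what produce the upper bound $\Ric^Z\leq k_2$. Without this two-point trick (or an equivalent multi-time test), your cycle does not close.

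A smaller issue: your scheme proves $(\mathrm{ii})\Rightarrow(\mathrm{iii})$ but never returns from $(\mathrm{iii})$, so the equivalence of $(\mathrm{iii})$ with the rest is left open. The paper handles this by proving $(\mathrm{iii})\Rightarrow(\mathrm{i})$ directly with the two test functionals above and remarking that the same argument works for $(\mathrm{ii}),(\mathrm{iv}),(\mathrm{v})$.
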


\begin{proof}
  (a)\, The following inequalities are well known (see \cite{ELL} and
  \cite[Chapter 4]{Wbook2}).  For convenience of the reader we include
  them with precise statements.
  \begin{enumerate}[1)]
  \item for $F\in \mathscr{F}C_{0,T}^{\infty}$, one has
  $$\nabla_x \E[F(X^x_{[0,T]})]=\E[\tilde{D}_0F(X^x_{[0,T]})];$$
\item for $F\in \mathscr{F}C_{0,T}^{\infty}$, one has
  \begin{align*}
    \E&\Big[\E[F^2(X_{[0,T]})|\mathscr{F}_{t_2}]\log \E[F^2(X_{[0,T]})|\mathscr{F}_{t_2}]\Big]
        -\E\Big[\E[F^2(X_{[0,T]})|\mathscr{F}_{t_1}]\log \E[F^2(X_{[0,T]})|\mathscr{F}_{t_1}]\Big]\\
      &\quad\leq 2\E\int_{t_1}^{t_2} |\tilde{D}_tF(X_{[0,T]})|^2\,\vd t;
  \end{align*}
\item for $F\in \mathscr{F}C_{0,T}^{\infty}$, one has
  $$\E\Big[\E[F(X_{[0,T]})|\mathscr{F}_{t_2}]^2\Big]-\E\Big[\E[F(X_{[0,T]})|\mathscr{F}_{t_1}]^2\Big]
  \leq \E\int_{t_1}^{t_2} |\tilde{D}_tF(X_{[0,T]})|^2\,\vd t.$$
\end{enumerate}
Hence it suffices to estimate $|\tilde{D}_tF(X_{[0,T]})|$. For the
sake of brevity, let $k=\frac{k_1+k_2}{2}$ and
$\tilde{k}=\frac{k_2-k_1}{2}$.  It is easy to see that
\begin{align*}
  \tilde{D}_tF
  &=\hat{D}_tF+\sum_{i=1}^{N}\1_{\{t\leq t_i\}}\left(\e^{k(t_i-t)/2}{Q}_{t,t_i}-\id\right)
    \e^{-k(t_i-t)/2}\,\partrinv t{t_i}\,\nabla_i F\\
  &=\hat{D}_tF+\int_t^T\e^{-k(s-t)/2}\frac{\vd \left(\e^{k(s-t)/2}{Q}_{t,s}\right)}{\vd s}\partrinv ts\hat{D}_sF\,\vd s.
\end{align*}
As
\begin{align*}
  \frac{\vd \left(\e^{k(s-t)/2}Q_{t,s}\right)}{\vd s}=-\frac{1}{2}\left(\e^{k(s-t)/2}Q_{t,s}\right)\left(\Ric_{\partr ts}^Z-k\,\id\right),
\end{align*}
we get
\begin{align*}
  \left|\tilde{D}_tF\right|&\leq \left|\hat{D}_tF\right|+\frac{1}{2}\int_t^T\|Q_{t,s}\|\cdot\|(\Ric^Z)^{\sharp}-k\,\id\|\cdot|\hat{D}_sF| \,\vd s\\
                           &\leq  \left|\hat{D}_tF\right|+\frac{1}{2}\tilde{k}\int_t^T\e^{-\frac{1}{2}k_1(s-t)}|\hat{D}_sF| \,\vd s.
\end{align*}
It follows that
\begin{align*}
  \left|\tilde{D}_tF\right|^2&\leq \e^{2ct}\l(\e^{-ct}\big|\hat{D}_tF\big|+\frac{1}{2}\tilde{k}\int_t^T\e^{-(\frac{1}{2}k_1-c)(s-t)}\e^{-cs}|\hat{D}_sF| \,\vd s\r)^2.
\end{align*}
Thus, by Cauchy's inequality, we obtain
\begin{align*}
  \left|\tilde{D}_tF\right|^2&\leq \e^{2ct}\l(1+\frac{1}{2}\int_t^T\tilde{k}\e^{-(\frac{1}{2}k_1-c)(s-t)}\,\vd s\r)\l(\e^{-2ct}\left|\hat{D}_tF\right|^2+\int_t^T\frac{1}{2}\tilde{k}\e^{-(\frac{1}{2}k_1-c)(s-t)-2cs}|\hat{D}_sF|^2\,\vd s\r)\\
                             &=\l(1+\frac{1}{2}\int_t^T\tilde{k}\e^{-(\frac{1}{2}k_1-c)(s-t)}\,\vd s\r)\l(\left|\hat{D}_tF\right|^2+\int_t^T\frac{1}{2}\tilde{k}\e^{-(\frac{1}{2}k_1+c)(s-t)}|\hat{D}_sF|^2\,\vd s\r).
\end{align*}
This allows to complete the proof of (i) implies (ii)--(v).

(b)\, Conversely, to prove $\text{(ii)--(v)}\Rightarrow\text{(i)}$, by
a similar argument as in \cite{Naber, WW}, it suffices to prove that
(iii) implies (i). Following \cite{Naber}, we first take
$F(X_{[0,T]}^x)=f(X_t^x)$ as test functional. In this case, (iii)
reduces to
\begin{align}\label{equi-1}
  & |\nabla P_tf|^2(x) \leq \l[\l(1+\frac{\tilde{k}}{2}\int_0^t\e^{-(\frac{k_1}{2}-c)r}\vd r\r)\l(1+\frac{\tilde{k}}{2}\int_0^t\e^{-(\frac{k_1}{2}+c-k)r}\vd r\r)\e^{-kt}\r] P_t|\nabla f|^2(x).
\end{align}
By means of the formula from \cite[Theorem 2.2.4]{Wbook2}:
\begin{align*}
  \Ric^Z(\nabla f,\, \nabla f)(x)
  &=\lim_{t\rightarrow 0}\frac{P_{t}|\nabla f|^2(x)-|\nabla P_{t}f|^2(x)}{t},\quad f\in C_0^{\infty}(M),
\end{align*}
we obtain the inequality $\Ric^Z\geq k_1$.  Taking however
$F(X_{[0,T]}^x)=f(x)-\frac{1}{2}f(X_t^x)$ as test functional, then
(iii) reduces to the inequality:
\begin{align*}
  &\l|\nabla f(x)-\frac{1}{2}\nabla P_tf(x)\r|^2
    \leq\l(1+\frac{k_2-k_1}{4}\int_0^t\e^{-(\frac{k_1}{2}-c)s}\vd s\r)\\
  &\qquad\qquad\times\l(\E\big|\nabla f(x)-\frac{1}{2}\e^{-\frac{k}{2}t}\partrinv0t\,\nabla f(X_t^x)\big|^2+\frac{k_2-k_1}{16}\l(\int_0^t\e^{-(c-\frac{k_2}{2})s}\vd s\r)\e^{-kt}\,P_t|\nabla f|^2(x)\r).
\end{align*}
Expanding the last inequality, we arrive at
\begin{align}\label{equi-2}
  |\nabla P_tf(x)|^2&-\l[\l(1+\frac{\tilde{k}}{2}\int_0^t\e^{-(\frac{k_1}{2}-c)r}\vd r\r)\l(1+\frac{\tilde{k}}{2}\int_0^t\e^{-(c-\frac{k_2}{2})r}\vd r\r)\e^{-kt}\r] P_t|\nabla f|^2(x)\notag\\
                    &\leq (k_2-k_1)\,\int_0^t\e^{-(\frac{k_1}{2}-c)s}\vd s\,|\nabla f(x)|^2 + 4\l<\nabla f(x), \nabla P_tf(x)\r>\notag\\
                    &\quad -4\l(1+\frac{k_2-k_1}{4}\int_0^t\e^{-(\frac{k_1}{2}-c)s}\vd s\r)\e^{-\frac{k}{2}t}
                      \big<\nabla f(x), \E\partrinv 0t\nabla f(X_t^x)\big>.
\end{align}
Then by \cite[Lemma 2.5]{Ch-Th:2016a} it is straightforward to derive
the upper bound $\Ric^Z\leq k_2$.
\end{proof}

\begin{remark}\label{rem4}
  In our forthcoming paper \cite{Ch-Th:2016a} we use a direct method
  which does not need to use the advanced theory on path space, to
  prove the result that the pinched curvature condition is equivalent
  to the coupled conditions \eqref{equi-1} and \eqref{equi-2} when
  $c=(k_1+k_2)/{4}$.
\end{remark}

\begin{proof}[Proof of Theorem \ref{dd}]
  The following inequalities are well known (see \cite{ELL} and
  \cite[Chapter 4 ]{Wbook2}).  For convenience of the reader we
  include them here, as we have done in the proof of
  Proposition~\ref{lem1}.
  \begin{enumerate}[1)]
  \item for $F\in \mathscr{F}C_{0,T}^{\infty}$, one has
  $$\E[F^2\log F^2]-\E[F^2]\log \E[F^2]\leq 2\E\int_0^T |\tilde{D}_tF(X_{[0,T]})|^2\,\vd t;$$
\item for $F\in \mathscr{F}C_{0,T}^{\infty}$, one has
  $$\E\big[F-\E[F]\big]^2\leq \E\int_0^T |\tilde{D}_tF(X_{[0,T]})|^2\,\vd t.$$
\end{enumerate}
Hence, it suffices to estimate $\E\int_0^T |\tilde{D}_tF|^2\,\vd t$
where $\tilde{D}_tF=\tilde{D}_tF(X_{[0,T]})$. By \cite{Naber}, we know
that
\begin{align*}
  |\tilde{D}_tF|\leq \big|{D}_tF\big|+\frac{|k_1|\vee|k_2|}{2}\int_t^T\e^{-\frac{1}{2}k_1(s-t)}|{D}_sF| \,\vd s.
\end{align*}
It follows that for any constant $c$, we have
\begin{align*}
  \left|\tilde{D}_tF\right|^2&\leq \e^{2ct}\l(\e^{-ct}\big|{D}_tF\big|+\frac{|k_1|\vee|k_2|}{2}\int_t^T\e^{-(\frac{1}{2}k_1-c)(s-t)}\e^{-cs}|{D}_sF| \,\vd s\r)^2.
\end{align*}
Thus, by Cauchy's inequality, we obtain
\begin{align}\label{eq:esti-1}
  \left|\tilde{D}_tF\right|^2\leq \l(1+\frac{|k_1|\vee|k_2|}{2}\int_t^T\e^{-(\frac{1}{2}k_1-c)(s-t)}\,\vd s\r)\l(\left|{D}_tF\right|^2+\frac{|k_1|\vee|k_2|}{2}\int_t^T\e^{-(\frac{1}{2}k_1+c)(s-t)}|{D}_sF|^2\,\vd s\r).
\end{align}
Let
$$\alpha_1(t)=1+\frac{|k_1|\vee|k_2|}{2}\int_t^T\e^{-(\frac{1}{2}k_1-c)(s-t)}\,\vd
s.$$
Then, integrating both sides of Eq.~\eqref{eq:esti-1} from $0$ to $T$
yields
\begin{align*}
  \int_0^T\left|\tilde{D}_tF\right|^2\,\vd t&\leq \int_0^T\alpha_1(t)\left(\left|{D}_tF\right|^2+\int_t^T\frac{|k_1|\vee|k_2|}{2}\e^{-(\frac{1}{2}k_1+c)(s-t)}|{D}_sF|^2\,\vd s\r) \vd t\\
                                            &=\int_0^T\l(\alpha_1(t)+\frac{|k_1|\vee |k_2|}{2}\int_0^t\alpha_1(s)\e^{-(\frac{1}{2}k_1+c)(t-s)}\,\vd s\r)|D_tF|^2\, \vd t\\
                                            &=\int_0^T\Lambda^c(t,T,k_1,|k_1|\vee|k_2|)|D_tF|^2\, \vd t\\
                                            &\leq S(T,k_1,|k_1|\vee|k_2|)\int_0^T|D_tF|^2\, \vd t.
\end{align*}

We are now going to prove
\begin{align*}
  \E\int_0^T |\tilde{D}_tF(X_{[0,T]})|^2\,\vd t\leq S\l(T,k_1, \frac{k_2-k_1}{2}\r)S\l(T,\frac{k_1+k_2}{2}, \frac{|k_2+k_1|}{2}\r)\int_0^T\E|D_tF(X_{[0,T]})|^2\,\vd t.
\end{align*}
Our first step is to show that
\begin{align*}
  \E\int_0^T |\tilde{D}_tF(X_{[0,T]})|^2\,\vd t\leq S\l(T,k_1,\frac{k_2-k_1}{2}\r) \int_0^T \E|\hat{D}_tF(X_{[0,T]})|^2\,\vd t.
\end{align*}
Recall the notations introduced above
$$\hat{D}_tF(X_{[0,T]}):=\sum_{i=1}^n\1_{\{t\leq t_i\}}\,\e^{-\frac{k_1+k_2}{4}(t_i-t)}\,\partrinv t{t_i}\,\nabla_i f(X_{t_1},\ldots,X_{t_n})$$
and $k:=\frac{k_1+k_2}{2},\, \tilde{k}:=\frac{k_2-k_1}{2}$.  By
Proposition \ref{lem1}, for any constant $c$, we have
\begin{align*}
  \left|\tilde{D}_tF\right|^2\leq \l(1+\frac{1}{2}\int_t^T\tilde{k}\e^{-(\frac{1}{2}k_1-c)(s-t)}\,\vd s\r)\l(\left|\hat{D}_tF\right|^2+\int_t^T\frac{1}{2}\tilde{k}\e^{-(\frac{1}{2}k_1+c)(s-t)}|\hat{D}_sF|^2\,\vd s\r).
\end{align*}
Integrating both sides from $0$ to $T$ yields
\begin{align*}
  \int_0^T\left|\tilde{D}_tF\right|^2\,\vd t&\leq \int_0^T\left(1+\frac{1}{2}\int_t^T\tilde{k}\e^{-(\frac{1}{2}k_1-c)(s-t)}\,\vd s\r)\left(\left|\hat{D}_tF\right|^2+\int_t^T\frac{1}{2}\tilde{k}\e^{-(\frac{1}{2}k_1+c)(s-t)}|\hat{D}_sF|^2\,\vd s\r) \vd t.
\end{align*}
Let
$\alpha_2(t)=1+\frac{1}{2}\int_t^T\tilde{k}\e^{-(\frac{1}{2}k_1-c)(s-t)}\vd
s$. Then
\begin{align*}
  \int_0^T\left|\tilde{D}_tF\right|^2\vd t\leq& \int_0^T \alpha_2(t)\left(\left|\hat{D}_tF\right|^2+\int_t^T\frac{1}{2}\tilde{k}\e^{-(\frac{1}{2}k_1+c)(s-t)}|\hat{D}_sF|^2\,\vd s\r) \,\vd t\\
  =&\int_0^T\alpha_2(t)\left|\hat{D}_tF\right|^2\,\vd t+\int_0^T\alpha_2(t)\int_t^T \frac{1}{2}\tilde{k}\e^{-(\frac{1}{2}k_1+c)(s-t)}|\hat{D}_sF|^2\,\vd s \,\vd t\\
  =& \int_0^T\left(\alpha_2(t)+\frac{1}{2}\tilde{k}\int_0^t\alpha_2(s) \e^{-(\frac{1}{2}k_1+c)(t-s)}\,\vd s\r)\left|\hat{D}_tF\right|^2\,\vd t\\
  =&\int_0^T\Lambda^c(t,T,k_1, \tilde{k})\left|\hat{D}_tF\right|^2\,\vd t.
\end{align*}
Therefore, we have
$$\int_0^T\left|\tilde{D}_tF\right|^2\vd t\leq \inf_{c\in \R}\sup_{t\in [0,T]}\Lambda^c(t,T, k_1, \tilde{k}) \int_0^T\left|\hat{D}_tF\right|^2 \vd t.$$
Our second step is to prove
\begin{align*}
  \int_0^T\E\left|\hat{D}_tF\right|^2\,\vd t\leq S(T, k, |k|)\int_0^T\E|{D}_tF|^2\,\vd t.
\end{align*}
To this end, we first observe that
\begin{align*}
  \left|\hat{D}_tF\right|&=\l|\sum_{i=1}^{N}\1_{\{t\leq t_i\}}\e^{-\frac{1}{2}k(t_i-t)}\,\partrinv t{t_i}\,\nabla_i F\r|
                         \leq |D_t F|+\frac{1}{2}|k|\int_t^T\e^{-\frac{1}{2}k(s-t)}|D_sF|\,\vd s.
\end{align*}
Let
$\alpha_3(t)=1+\frac{1}{2}|k|\int_t^T\e^{-(\frac{1}{2}k-c)(s-t)}\,\vd
s$ for some constant $c$. We have
\begin{align*}
  \int_0^T\left|\hat{D}_tF\right|^2\,\vd t&\leq \int_0^T\l(|D_t F|+\frac{1}{2}|k|\int_t^T\e^{-\frac{1}{2}k(s-t)}|D_sF|\,\vd s\r)^2 \,\vd t\\
                                          &\leq \int_0^T\l(\alpha_3(t)+\frac{1}{2}|k|\int_0^t\alpha_3(s)\e^{-(\frac{1}{2}k+c)(t-s)}\,\vd s\r)|D_tF|^2\,\vd s.
\end{align*}
It is easy to see that
$$\Lambda^c(t,T, k, |k|)=\alpha_3(t)+\frac{1}{2}|k|\int_0^t\alpha_3(s)\e^{-(\frac{1}{2}k+c)(t-s)}\,\vd s.$$
Hence, we arrive at
\begin{equation*}
  \int_0^T\E\left|\tilde{D}_tF\right|^2\,\vd t \leq S(T, k_1,\tilde{k})S(T, k, |k|)\int_0^T\E |{D}_tF|^2\,\vd t,\qedhere
\end{equation*}
which completes the proof of Theorem \ref{dd}.
\end{proof}

In the proof of Theorem~\ref{th1} the function $\Lambda:=\Lambda^0$
will play an important role. More precisely, for constants $K_1$ and
$K_2$, we have
\begin{align*}\Lambda&(t,T, K_1, K_2)\\
                     &\quad=\begin{cases}
                       \displaystyle
                       (1+\beta)^2-(\beta+\beta^2)\e^{-{K_1t}/{2}}-\frac{2\beta+\beta^2}{2}\e^{-{K_1(T-t)}/{2}}
                       +\frac{\beta^2}{2}\e^{-{K_1(T+t)}/{2}}, & \hbox{if}\ K_1\neq 0, \\
                       \displaystyle 1+\frac{K_2T}{2}+\frac{K_2^2}{8}(2Tt-t^2), &
                       \hbox{if}\ K_1=0
                     \end{cases}
\end{align*}
where $\beta={K_2}/{K_1}$.  We choose here the value $c=0$, since it
seems to give the best asymptotics as $T\to 0$.

\begin{proposition}\label{prop1}
  Let $K_1$ and $K_2$ be two constants such that $K_2\geq 0$. Then
$$
C(T,K_1,K_2)=\sup_{t\in [0,T]}\Lambda(t,T,K_1,K_2),
$$
where $C(T,K_1,K_2)$ is defined as in \eqref{C-fun}.
\end{proposition}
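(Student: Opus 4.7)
The plan is to reduce the proof to a classical calculus problem, maximising $\Lambda(\cdot,T,K_1,K_2)$ on $[0,T]$ and identifying the value with $C(T,K_1,K_2)$ in each of the three cases $K_1=0$, $K_1>0$, $K_1<0$. The case $K_1=0$ is immediate: since $\Lambda(t,T,0,K_2)=1+K_2T/2+(K_2^2/8)(2Tt-t^2)$ has derivative $(K_2^2/4)(T-t)\geq0$ on $[0,T]$, the supremum is attained at $t=T$ and equals $1+K_2T/2+K_2^{\,2}T^2/8$, matching $C(T,0,K_2)$.

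For $K_1\neq 0$, I would differentiate the explicit expression for $\Lambda$ and, writing $u=\e^{-K_1 t/2}$, $v=\e^{-K_1(T-t)/2}$ (so $uv=\e^{-K_1 T/2}$ and $\e^{-K_1(T+t)/2}=u^2 v$), obtain
\begin{equation*}
\frac{\vd\Lambda}{\vd t}=\frac{K_1}{4}\bigl[2\beta(1+\beta)u-v\bigl(\beta(2+\beta)+\beta^2 u^2\bigr)\bigr].
\end{equation*}
Setting this to zero and eliminating $v$ via $uv=\e^{-K_1 T/2}$ yields the critical-point equation
\begin{equation*}
u^2=\frac{\e^{-K_1 T/2}(2+\beta)}{2+2\beta-\beta\e^{-K_1 T/2}}=:\frac{B}{A}.
\end{equation*}
For $K_1>0$ (hence $\beta\geq 0$), I would verify that $\Lambda'(0)>0$ and $\Lambda'(T)<0$, so the supremum is attained at the unique interior critical point. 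Then the crucial algebraic step: grouping terms of $\Lambda$ as $(1+\beta)^2-\frac{\beta}{2}(uA+B/u)$ and using the identity $u^2 A=B$ to collapse $uA+B/u=2B/u=2\sqrt{AB}$, one obtains
\begin{equation*}
\Lambda_{\mathrm{crit}}=(1+\beta)^2-\beta\sqrt{AB}=(1+\beta)^2-\beta\,\e^{-K_1 T/4}\sqrt{(2+\beta)\bigl(2+2\beta-\beta\e^{-K_1 T/2}\bigr)},
\end{equation*}
exactly the value $C(T,K_1,K_2)$ in the case $K_1>0$.

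For $K_1<0$ (hence $\beta\leq 0$ since $K_2\geq 0$), a direct evaluation gives $\Lambda'(0)=\frac{K_1\beta(1+\beta)}{2}(1-\e^{-K_1 T/2})\leq 0$, while a similar factorisation of the quadratic in $\e^{-K_1 T/2}$ appearing in $\Lambda'(T)$ (whose roots are $1$ and $(2+\beta)/\beta$) shows $\Lambda'(T)\geq 0$. Hence any interior critical point is a local minimum and the supremum lies at a boundary. Computing
$\Lambda(0,T,K_1,K_2)=1+\beta(1-\e^{-K_1 T/2})$ and
$\Lambda(T,T,K_1,K_2)=1+\beta(1-\e^{-K_1 T/2})+\tfrac{\beta^2}{2}(1-\e^{-K_1 T/2})^2$,
one sees $\Lambda(T,T)\geq\Lambda(0,T)$ and a two-line expansion confirms $\Lambda(T,T)=\tfrac12+\tfrac12\bigl(1+\beta(1-\e^{-K_1 T/2})\bigr)^2=C(T,K_1,K_2)$. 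The main obstacle is the algebraic reduction at the critical point for $K_1>0$: the apparently ugly combination of three exponentials collapses only because of the specific relation $u^2A=B$ imposed by $\Lambda'=0$, so one must track that cancellation carefully. Once this is handled, the sign analysis of $\Lambda'$ at the endpoints in the other regimes is routine.
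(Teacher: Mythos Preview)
Your approach is the same as the paper's---direct calculus on $\Lambda$, analysing the sign of $\Lambda'$ at the endpoints and at interior critical points---and your treatment of $K_1=0$ and $K_1>0$ is correct and complete. In fact the paper simply cites \cite[Proposition~3.3]{FW15} for the range $K_1+K_2\geq0$, so your explicit computation of the interior maximum when $K_1>0$ (via the identity $u^2A=B$ collapsing $uA+B/u$ to $2\sqrt{AB}$) is a genuine addition.

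There is, however, a gap in your case $K_1<0$. Your claim that
\[
\Lambda'(0)=\frac{K_1\beta(1+\beta)}{2}\bigl(1-\e^{-K_1T/2}\bigr)\leq 0
\]
is not valid across the whole range. Since $K_1\beta=K_2\geq0$ and $1-\e^{-K_1T/2}<0$, the sign hinges on $1+\beta=(K_1+K_2)/K_1$, which is \emph{negative} when $K_1<0$ and $K_1+K_2>0$. In that subcase $\Lambda'(0)>0$, and your ``any interior critical point is a local minimum'' argument collapses.

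The conclusion that the supremum equals $\Lambda(T,T,K_1,K_2)$ is nevertheless correct, and the repair is short. As the paper observes, $\Lambda'(t)$ equals $\tfrac{K_1}{4}\e^{-K_1t/2}$ times an expression affine in $\e^{K_1t}$, so $\Lambda'$ changes sign at most once on $[0,T]$. Combined with your (correct) verification that $\Lambda'(T)\geq0$, this forces $\Lambda'\geq0$ throughout whenever $\Lambda'(0)>0$; hence $\Lambda$ is nondecreasing and the maximum is again at $t=T$. Splitting $K_1<0$ into the two subcases $K_1+K_2\leq0$ (your boundary argument works verbatim) and $K_1+K_2>0$ (monotonicity as just described) closes the gap.
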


\begin{proof}
  For the case $K_1+K_2\geq 0$, the reader is referred to
  \cite[Proposition 3.3]{FW15}. It suffices to deal with the remaining
  case $K_1+K_2<0$.  The idea is similar to the proof of
  \cite[Proposition 3.3]{FW15}.

  When $K_1+K_2<0$ and $K_2\geq 0$, we must have $K_1<0$. Taking
  derivative of $\Lambda$ with respect to $t$, we obtain
  \begin{align*}
    \Lambda'(t,T,K_1,K_2)=\frac{K_1}{4}\e^{-{K_1t}/{2}}\l[2(\beta+\beta^2)-\beta^2\e^{-{K_1T}/{2}}-(2\beta+\beta^2)\e^{-{K_1T}/{2}}\e^{K_1t}\r],
  \end{align*}
  where $\beta=K_2/K_1$.  From this it is easy to see that there
  exists at most one point $t$ such that $$\Lambda'(t,T,K_1,K_2)=0.$$
  In addition, for the boundary values $t=0,T$, we have
  \begin{align*}
    \Lambda'(0,T,K_1,K_2)&=\frac{1}{2}\beta(K_1+K_2)\left(1-\e^{-{K_1T}/{2}}\right)<0;\\
    \Lambda'(T,T,K_1,K_2)&=-\frac{K_2}{2}\left(1-\e^{-{K_1T}/{2}}\right)-\frac{K_2^2}{4K_1}\left(1-\e^{-{K_1T}/{2}}\right)^2>0.
  \end{align*}
  Thus, we obtain that the maximal value of $\Lambda$ over the
  interval $[0,T]$ is reached either at $t=0$ or at $t=T$.  Moreover,
  by inspection it is easy to see that
  $\Lambda(0,T,K_1,K_2)\leq
  \frac{1}{2}+\frac{1}{2}\Lambda^2(0,T,K_1,K_2)=\Lambda(T,T,K_1,K_2)$.
  All this taken together, we may conclude that
  \begin{equation*}
    \sup_{t\in [0,T]}\Lambda(t,T,K_1,K_2)=\Lambda(T,T,K_1,K_2).\qedhere
  \end{equation*}
\end{proof}

\begin{proof}[Proof of Theorem \ref{th1}]
  From Theorem \ref{dd} we conclude that
  \begin{align}\label{general-esti}
\gap(\mathcal{L})^{-1}\leq {H(T,k_1,k_2)}.
  \end{align}
Moreover, it is easy to be observed that
$$S\l(T, K_1,K_2\r)\leq \sup_{t\in [0,T]}\Lambda\l(t,T, K_1,K_2\r)=C\l(T, K_1,K_2\r),$$
which allows to complete the proof of Theorem \ref{th1}.
\end{proof}

\begin{proof}[Proof of Theorem \ref{th2}]
  We check the short-time behavior of $C(T,K_1, K_2)$ for $K_2\geq 0$
  first. If $K_1>0$, then
  \begin{align*}
    C(T,K_1,K_2)&=(1+\beta)^2-\beta\sqrt{(2+\beta)\left(2+2\beta-\beta\e^{-{K_1T}/{2}}\right)}\e^{-{K_1T}/{4}}\\
                &=(1+\beta)^2-\beta(2+\beta)\e^{-{K_1T}/{4}}\sqrt{1+\frac{\beta}{2+\beta}\left(1-\e^{-{K_1T}/{2}}\right)}.
  \end{align*}
  Note that
  \begin{align*}
    \sqrt{1+\frac{\beta}{2+\beta}(1-\e^{-{K_1T}/{2}})}&=1+\frac{\beta}{2(2+\beta)}(1-\e^{-{K_1T}/{2}})
                                                        -\frac{1}{8}\frac{\beta^2}{(2+\beta)^2}(1-\e^{-{K_1T}/{2}})^2+{\rm o}(T^2)\\
                                                      &=1+\frac{\beta}{2(2+\beta)}\l(\frac{1}{2}K_1T-\frac{1}{8}(K_1T)^2\r)-
                                                        \frac{\beta^2}{32(2+\beta)^2}(K_1T)^2+{\rm o}(T^2)\\
                                                      &=1+\frac{\beta}{4(2+\beta)} K_1T-
                                                        \frac{\beta(4+3\beta)}{32(2+\beta)^2}(K_1T)^2+{\rm o}(T^2).
  \end{align*}
  Thus,
  \begin{align*}
    C(T,K_1,K_2)&=(1+\beta)^2-\beta(2+\beta)\l(1-\frac{1}{4}K_1T+\frac{1}{32}(K_1T)^2+{\rm o}(T^2)\r)\\
                &\hspace{3cm} \times\l(1+\frac{\beta}{4(2+\beta)} K_1T-
                  \frac{\beta(4+3\beta)}{32(2+\beta)^2}(K_1T)^2+{\rm o}(T^2)\r)\\
                &=1+\frac{K_2T}{2}+\l(1-\frac{(K_1+K_2)K_1}{(2K_1+K_2)K_2}\r)\frac{K_2^2T^2}{8}+{\rm o}(T^2).
  \end{align*}
  If $K_1<0$, then
  \begin{align*}
    C(T,K_1,K_2)&=\frac{1}{2}+\frac{1}{2}\l(1+\beta(1-\e^{-{K_1T}/{2}})\r)^2\\
                &=\frac{1}{2}+\frac{1}{2}\l(1+\beta\frac{K_1 T}{2}-\beta\frac{(K_1T)^2}{8}+{\rm o}(T^2)\r)^2\\
                &=1+\frac{1}{2}K_2T+\l(1-\frac{K_1}{K_2}\r)\frac{(K_2T)^2}{8}+{\rm o}(T^2).
  \end{align*}
  Hence, for $C(T,k_1, |k_1|\vee |k_2|)$, we obtain
  \begin{align*}
    C(T,k_1, |k_1|\vee |k_2|)=\begin{cases}
      \displaystyle 1+\frac{k_2}{2}T+\frac{k_2^2}{8_{\mathstrut}}T^2-\frac{k_1k_2(k_1+k_2)}{8(2k_1+k_2)}T^2+{\rm o}(T^2),& k_1\geq 0,\\
      \displaystyle1+\frac{k_2}{2}T+\frac{k_2^2}{8_{\mathstrut}}T^2-\frac{k_1k_2}{8}T^2+{\rm o}(T^2),& k_1+k_2\geq 0 \ \text{ and }\ k_1<0,\\
      \displaystyle1-\frac{k_1}{2}T+\frac{k_1^2}{8}T^2+\frac{(k_1T)^2}{8}+{\rm o}(T^2),& k_1+k_2<0.
    \end{cases}
  \end{align*}
  We now turn to estimate
  $\displaystyle
  C\big(T,k_1,\frac{k_2-k_1}{2}\big)\,C\big(T,\frac{k_1+k_2}{2},\frac{|k_2+k_1|}{2}\big)$.
  \begin{enumerate}[(i)]
  \item When $k_1+k_2<0$, we have
    \begin{align*}
      &C\l(T,k_1,\frac{k_2-k_1}{2}\r)C\l(T,\frac{k_2+k_1}{2},-\frac{k_2+k_1}{2}\r)\\
      &=1+\frac{k_2}{2}T+\frac{k_2^2}{8}T^2+\frac{3k_1^2+k_2^2}{32}T^2+{\rm o}(T^2);
    \end{align*}
  \item when $k_1+k_2\geq 0$ and $k_1\leq 0$,
    \begin{align*}
      &C\l(T,k_1,\frac{k_2-k_1}{2}\r)C\l(T,\frac{k_2+k_1}{2},\frac{k_2+k_1}{2}\r)\\
      &=1+\frac{k_2}{2}T+\frac{k_2^2}{8}T^2+\frac{2k_1^2-k_2^2-5k_1k_2}{48}T^2+{\rm o}(T^2);
    \end{align*}
  \item when $k_1> 0$,
    \begin{align*}
      &C\l(T,k_1,\frac{k_2-k_1}{2}\r)C\l(T,\frac{k_2+k_1}{2},\frac{k_2+k_1}{2}\r)\\
      &=1-\frac{k_1}{2}T+\frac{k_1^2}{8}T^2-\frac{(7k_1k_2+k_2^2)(k_1+k_2)}{48(3k_1+k_2)}T^2+{\rm o}(T^2).
    \end{align*}
  \end{enumerate}
  Summarizing the estimates above, we conclude that as
  $T\rightarrow 0$,
 $$C(T,k_1, |k_1|\vee |k_2|)\ \text{ and }\ C\l(T,k_1,\frac{k_2-k_1}{2}\r)C\l(T,\frac{k_1+k_2}{2},\frac{|k_2+k_1|}{2}\r)$$ have the same first order term, i.e.~coefficient of $T$, and we only need to compare the coefficients of~$T^2$.
 \begin{enumerate}[(i)]
 \item If $k_1\geq 0$, then
$$-\frac{(7k_1k_2+k_2^2)(k_1+k_2)}{48(3k_1+k_2)}+\frac{k_1k_2(k_1+k_2)}{8(2k_1+k_2)}=-\frac{(k_2^2-k_1^2)(4k_1+k_2)k_2}{48(3k_1+k_2)(2k_1+k_2)}\leq 0.$$
\item If $k_1+k_2\geq 0$ and $k_1<0$, then
$$\frac{2k_1^2-k_2^2-5k_1k_2}{48}+\frac{k_1k_2}{8}=\frac{(2k_1-k_2)(k_1+k_2)}{48}\leq 0.$$
\item If $k_1+k_2<0$, then
$$\frac{3k_1^2+k_2^2}{32}-\frac{k_1^2}{8}=\frac{k_2^2-k_1^2}{32}<0.$$
\end{enumerate}
From this we conclude that
$$C\left(T,k_1,\frac{k_2-k_1}{2}\right)\,C\left(T,\frac{k_1+k_2}{2},\frac{|k_2+k_1|}{2}\right)$$
has a smaller coefficient in $T^2$.  The proof is then completed by
using Theorem \ref{th1}.
\end{proof}

\section{Extension to the  path space of evolving manifolds}\label{Sect:3}
In this section, our base space is a differentiable manifold carrying
a geometric flow of complete Riemannian metrics, more precisely, a
$d$-dimensional differential manifold $M$ equipped with a family of
complete Riemannian metrics $(g_t)_{t\in [0,T_c)}$ for some
$T_c\in (0,\infty]$, which is $C^1$ in $t$.

Let $\nabla^t$ and $\Delta_t$ be the Levi-Civita connection and the
Laplace-Beltrami operator associated with the metric $g_t$,
respectively. Let $(Z_t)_{t\in [0,T_c)}$ be a $C^{1,\infty}$-family of
vector fields.  Consider the diffusion process $X_t^{x}$ generated by
$L_t=\frac{1}{2}(\Delta_t+Z_t)$ (called $L_t$-diffusion process)
starting from $x$ at time $0$, which is assumed to be non-explosive
before $T_c$ (see \cite{KR} for sufficient conditions).

It is well-known (e.g.~\cite{ACT,Elworthy:1988}) that $X_t^{x}$ solves
the equation
$$\vd X_t^{x}= u_t^x\circ \vd B_t+\frac{1}{2}Z_t(X_t^{x})\vd t,\quad X_0^{x}=x=\pi (u^x_0),$$
where $B_t$ is an $\R^d$-valued Brownian motion on a filtered
probability space $(\Omega,\{\mathscr{F}_t\}_{t\geq 0}, \P)$
satisfying the usual conditions.  Here $u^x_t$ is a horizontal process
above $X^x_t$ taking values in the frame bundle over~$M$, constructed
in such a way that the parallel transports
$$\partr st:=u^x_t\circ (u^x_s)^{-1}\colon(T_{X^x_s}M,g_s)\to(T_{X^x_t}M,g_t),\quad s\leq t,$$
along the paths of $X$ are isometries, see~\cite{ACT} for the
construction, as well as Section 3 in \cite{Ch-Th:2016a} for some
details.

By It\^{o}'s formula, for any $f\in C_0^2(M)$ and $t\in [0,T_c)$, the
process
$$f(X_t^{x})-f(x)-\int_0^tL_rf(X^{x}_r)\,\vd r=\int_0^t\big< \partrinv0r\nabla^rf(X^{x}_r), u_0^x\,\vd B_r\big >_0$$
is a martingale up to $T_c$, where $\l<\cdot,\cdot\r>_0$ is the inner
product on $T_xM$ given by the initial metric $g_0$. In other words,
$X_t^{x}$ is a diffusion generated by $L_t$.

For the sake of brevity, we introduce the following notation: for
$X, Y\in TM$ such that $\pi(X)=\pi(Y)$ let
\begin{align*}
  &\mathcal{R}_t^Z(X,Y):=\Ric_t(X,Y)-\l<\nabla^t_XZ_t, Y\r>_t-\partial_tg_t(X,Y)
\end{align*}
where $\Ric_t$ is the Ricci curvature tensor with respect to the
metric $g_t$ and $\l<\cdot,\cdot\r>_t=g_t(\cdot,\cdot)$. In what
follows, given functions $\phi,\psi$ on $[0,T_c)\times M$, we write
$\psi\leq \mathcal{R}_t^Z\leq \phi$ if
$$\psi|X|^2_t\leq \mathcal{R}_t^Z(X,X)\leq \phi |X|^2_t$$ holds for
all $X\in TM$, where $|X|_t:=\sqrt{g_t(X,X)}$.

Similarly to Eq.~\eqref{e:Q-functional} we define a two-parameter
family of multiplicative functionals $\{Q_{r,t}\}_{r\leq t}$ as
solution to the following equation: for $0\leq r\leq t<T_c$ let
\begin{align}\label{e:Q-functional-1}
  \frac{\vd Q_{r,t}}{\vd t}=-\frac{1}{2}Q_{r,t}\,{\mathcal R}^Z_{\partr rt}\,,\quad Q_{r,r}=\id,
\end{align}
where by definition
$${\mathcal R}^Z_{\partr rt}:={\partrinv rt}\circ{\mathcal R}^Z_t(X_t^x)\circ{\partr rt}.$$

Let $W(M)$ be the path space of $M$. Fixing $T\in (0,T_c)$ we have the
space of smooth cylindrical functions on $W(M)$ defined as
$$\mathscr{F}C_{0,T}^{\infty}:=\left\{F\colon F=f(X_{t_1},\ldots, X_{t_n}),\
  0<t_1<\ldots<t_n\leq T,\ f\in C_0^{\infty}(M^n)\right\}.$$
For $F\in \mathscr{F}C_{0,T}^{\infty}$ we consider again different
types of gradients:
\begin{enumerate}[(i)]
\item \textit{intrinsic gradient}:
$$D_tF(X_{[0,T]})=\sum_{i=1}^n\1_{\{t\leq t_i\}}\,\partrinv t{t_i}\,\nabla^{t_i}_i f(X_{t_1},\ldots, X_{t_n}), \quad t\in [0,T];$$
\item \textit{damped gradient}:
$$\tilde{D}_tF(X_{[0,T]})=\sum_{i=1}^{n}\1_{\{t\leq t_i\}}\,Q_{t,t_i}\,\partrinv t{t_i}\,\nabla^{t_i}_i f(X_{t_1},\ldots, X_{t_n}),\quad t\in [0,T];$$
\item \textit{modified gradient}:
$$\hat{D}_tF(X_{[0,T]})=\sum_{i=1}^n\1_{\{t\leq
  t_i\}}\,\e^{-\frac{1}{4}{\textstyle\int_t^{t_i}}(k_1+k_2)(r)\vd
  r}\,\partrinv t{t_i}\,\nabla^{t_i}_i f(X_{t_1},\ldots,X_{t_n}).$$
\end{enumerate}
We again write briefly $D_tF$, $\tilde{D}_tF$ and $\hat{D}_tF$ instead
of $D_tF(X_{[0,T]})$, $\tilde{D}_tF(X_{[0,T]})$ and
$\hat{D}_tF(X_{[0,T]})$ if there no ambiguity.  In terms of the
intrinsic gradient $D_0$, we consider the one-parameter family of
Ornstein-Uhlenbeck type operators given as
$$\mathcal{L}=-D_0^{\,*}D_0.$$
Our aim is to give an estimate for the spectral gap of $\mathcal{L}$,
denoted by $\gap(\mathcal{L})$.  To this end, we use the Poincar\'{e}
inequality and log-Sobolev inequality of the next theorem. For the
precise statement some notation is required.  Given three functions
$K_1$, $K_2$ and $c$ in $C([0,T])$, we define
$$\tilde{\Lambda}^c(t,T,K_1,K_2)
=\alpha(t)+\frac{1}{2}K_2(t)\int_0^t\alpha(s)\e^{-\frac{1}{2}{\textstyle\int_s^t}(K_1+2c)(r)\vd
  r}\vd s$$ where
$$\alpha(t)=1+\frac{1}{2}\int_t^TK_2(s)\e^{-\frac{1}{2}{\textstyle\int_t^s}(K_1-2c)(r)\vd
  r}\vd s.$$ Furthermore let
\begin{align*}
  &\tilde{S}(T,K_1,K_2)=\inf_{c\in C([0,T])}\sup_{t\in
    [0,T]}\tilde{\Lambda}^c(t,T,K_1,K_2).
\end{align*}
Note that if $K_1, K_2, c$ are constants then
$$\tilde{\Lambda}^c(t,T,K_1,K_2)={\Lambda}^c(t,T,K_1,K_2).$$

Analogously to Theorem \ref{dd} recall the following two inequalities.

\begin{theorem}\label{Fol-dd}
  Assume that there exist continuous functions $k_1, k_2$ such that
  for every vector field~$X$,
  \begin{equation}\label{curvature-cond}
    k_1(t)\,|X|_t^2\leq \mathscr{R}_t^Z(X,X)\leq k_2(t)\,|X|_t^2,\quad t\in [0,T].
  \end{equation}
  Then,
  \begin{enumerate}[\rm(i)]
  \item for every cylindrical function
    $F\in \mathscr{F}C_{0,T}^{\infty}$,
$$\E[F^2\log F^2]-\E [F^2]\log \E [F^2]\leq 2 \tilde{H}(T,k_1, k_2)\int_0^T\E|D_sF|^2_{s}\,\vd s,$$
where
$$\tilde{H}(T,k_1,k_2)=\tilde{S}(T,k_1,|k_1|\vee|k_2|)\wedge
\l[\tilde{S}\l(T,k_1,\frac{k_2-k_1}{2}\r)\tilde{S}\l(T,\frac{k_1+k_2}{2},\frac{|k_1+k_2|}{2}\r)\r];$$

\item for every cylindrical function
  $F\in \mathscr{F}C_{0,T}^{\infty}$,
$$\E\big[F-\E[F]\big]^2\leq \tilde{H}(T,k_1, k_2)\int_0^T\E|D_sF|^2_{s}\,\vd s.$$
\end{enumerate}
\end{theorem}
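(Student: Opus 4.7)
The plan is to follow step by step the proof of Theorem \ref{dd}, replacing constant curvature bounds by the time-dependent functions $k_1(\cdot),k_2(\cdot)$, each inner product by the evolving metric $g_t$, and $\Lambda^c$ by $\tilde\Lambda^c$. The whole derivation rests on the evolving-manifold counterparts of the two reference inequalities used there: for every $F\in\mathscr F C_{0,T}^{\infty}$,
\begin{align*}
\E[F^2\log F^2]-\E[F^2]\log\E[F^2]\le 2\,\E\int_0^T|\tilde D_tF|_t^{\,2}\,\vd t,\qquad \E[F-\E F]^2\le \E\int_0^T|\tilde D_tF|_t^{\,2}\,\vd t.
\end{align*}
These are available from the damped integration-by-parts formula on $W(M)$ developed in \cite{Ch1}; the only new feature is that each norm is measured with respect to the metric at the corresponding time, which is consistent because $\partr st\colon (T_{X_s}M,g_s)\to (T_{X_t}M,g_t)$ is an isometry.

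Granted this, the argument reduces to a pathwise estimate of $|\tilde D_tF|_t$. Starting from $\tilde D_tF=D_tF+\int_t^T(\vd Q_{t,s}/\vd s)\,\partrinv ts\, D_sF\,\vd s$ together with $\|Q_{t,s}\|\le \exp(-\tfrac12\int_t^s k_1(r)\,\vd r)$ and $\|\mathscr R^Z_{\partr ts}\|\le(|k_1|\vee|k_2|)(s)$, one obtains the direct bound
\begin{align*}
|\tilde D_tF|_t\le |D_tF|_t+\tfrac12\int_t^T(|k_1|\vee|k_2|)(s)\,\e^{-\frac12\int_t^s k_1(r)\vd r}\,|D_sF|_s\,\vd s.
\end{align*}
Inserting $\hat D_tF$ as an intermediate gradient, the same computation applied to $\e^{\frac12\int_t^s\frac{k_1+k_2}{2}(r)\vd r}Q_{t,s}$, whose $s$-derivative involves $\mathscr R^Z_{\partr ts}-\frac{k_1+k_2}{2}(s)\id$ of norm at most $\tfrac{k_2-k_1}{2}(s)$, yields
\begin{align*}
|\tilde D_tF|_t&\le |\hat D_tF|_t+\tfrac12\int_t^T\tfrac{k_2-k_1}{2}(s)\,\e^{-\frac12\int_t^s k_1(r)\vd r}\,|\hat D_sF|_s\,\vd s,\\
|\hat D_tF|_t&\le |D_tF|_t+\tfrac12\int_t^T\bigl|\tfrac{k_1+k_2}{2}\bigr|(s)\,\e^{-\frac12\int_t^s \frac{k_1+k_2}{2}(r)\vd r}\,|D_sF|_s\,\vd s,
\end{align*}
the second coming from $\e^{-\frac14\int_t^{t_i}(k_1+k_2)}=1-\tfrac12\int_t^{t_i}\tfrac{k_1+k_2}{2}(s)\,\e^{-\frac14\int_t^s(k_1+k_2)}\vd s$ and Fubini.

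To pass from these pointwise estimates to the integral form of the theorem, I would multiply each inequality by $\e^{\pm\int_0^t c(r)\vd r}$ for an arbitrary $c\in C([0,T])$, apply Cauchy--Schwarz, integrate in $t$ over $[0,T]$ and swap the order of integration; the kernels that appear are precisely $\tilde\Lambda^c(t,T,\cdot,\cdot)$. Taking the supremum in $t$ and the infimum over $c$ gives
\begin{align*}
\int_0^T|\tilde D_tF|_t^{\,2}\,\vd t\le \tilde S(T,k_1,|k_1|\vee|k_2|)\int_0^T|D_tF|_t^{\,2}\,\vd t
\end{align*}
and, chaining through the intermediate $\hat D$, the analogous product bound with factor $\tilde S(T,k_1,\tfrac{k_2-k_1}{2})\,\tilde S(T,\tfrac{k_1+k_2}{2},\tfrac{|k_1+k_2|}{2})$. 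Their minimum is $\tilde H(T,k_1,k_2)$, and substituting into the reference inequalities yields both (i) and (ii) simultaneously.

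The only genuinely new ingredient, and the main technical hurdle, is the first step: verifying that the damped log-Sobolev and Poincar\'e inequalities are available for the $L_t$-diffusion on the evolving manifold. Once this is taken from \cite{Ch1}, the remaining work is a careful bookkeeping adaptation in which constants become continuous functions of $r$ and exponentials $\e^{-K_1(s-t)/2}$ are replaced by $\exp(-\tfrac12\int_t^s k_1(r)\vd r)$; no conceptual difficulty arises, since the parallel transport isometry $\partr st$ absorbs all the metric switches that the time dependence would otherwise cause.
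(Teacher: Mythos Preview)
Your proposal is correct and follows essentially the same route as the paper's proof: both start from the damped log-Sobolev and Poincar\'e inequalities for the $L_t$-diffusion (taken from \cite{Ch1}), then bound $|\tilde D_tF|_t$ pathwise either directly via $\|Q_{t,s}\|\le\exp(-\tfrac12\int_t^s k_1)$ and $\|\mathscr R^Z_{\partr ts}\|\le(|k_1|\vee|k_2|)(s)$, or through the intermediate gradient $\hat D_tF$ using the derivative of $\e^{\frac12\int_t^s\bar k}Q_{t,s}$, and finally apply the Cauchy--Schwarz/Fubini argument with weight $c\in C([0,T])$ to produce $\tilde\Lambda^c$. The only cosmetic difference is that the paper packages the $\tilde D\to\hat D$ step as a citation of Proposition~\ref{lem2}, while you redo that computation inline; the content is identical.
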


Similarly to Section 2, we need the characterizations of modified
pinched curvature condition on path space to prove Theorem
\ref{Fol-dd}. In the following, we will use the notation:
$$\E^{(x,t)}[\,\bolddot\,]:=\E[\,\bolddot\,|\mathscr{F}_t,\, X_t=x].$$

\begin{proposition}\label{lem2}
  Let $(M,g_t)_{t\in [0,T_c)}$ be a smooth manifold carrying a family
  of complete metrics $g_t$. Let $k_1, k_2$ be two continuous functions in $C([0,T_c))$ such
  that $k_1\leq k_2$. For any $T\in (0,T_c)$, the following conditions
  are equivalent:
  \begin{enumerate}[\rm (i)]
  \item for any $t\in [0,T]$,
   $$k_1(t)\leq \mathcal{R}_t^Z\leq k_2(t);$$
 \item for any $F\in \mathscr{F}C_{0,T}^{\infty}$,
 $$\big|\nabla_{x}^t\,\E^{(x,t)} (F(X_{[0,T]}))\big|_t\leq \E^{(x,t)}(|\hat{D}_tF|_t)
 +\frac{1}{2}\int_t^T\tilde{k}(s)\e^{-\frac{1}{2}{\textstyle\int_t^s}
   k_1(r)\vd r}\E^{(x,t)}(|\hat{D}_sF|_s)\vd s$$
 where $\tilde{k}=(k_2-k_1)/2;$
\item for any $F\in \mathscr{F}C_{0,T}^{\infty}$ and any continuous
  function $c$ on $[0,T]$,
  \begin{align*}
    \big|\nabla_{x}^t\,\E^{(x,t)} (F(X_{[0,T]}))\big|_t^2
    &\leq \l(1+\frac{1}{2}\int_t^T\tilde{k}(s)\e^{-{\textstyle\int_t^s} (\frac{1}{2} k_1(r)-c(r))\vd r}\vd s\r)\\
    &\qquad\times\l(\E^{(x,t)}(|\hat{D}_tF|_t^2)+\frac{1}{2}\int_t^T\tilde{k}(s)
      \e^{-{\textstyle\int_t^s} (\frac{1}{2}k_1(r)+c(r))\vd r}\E^{(x,t)}(|\hat{D}_sF|_s^2)\,\vd s\r);
  \end{align*}
\item for any $F\in \mathscr{F}C_{0,T}^{\infty}$, any continuous
  function $c$ on $[0,T]$, and any $t_1<t_2$ in $[0,T]$,
  \begin{align*}
    \E&\Big[\E[F^2(X_{[0,T]})|\mathscr{F}_{t_2}]\log \E[F^2(X_{[0,T]})|\mathscr{F}_{t_2}]\Big]
        -\E\Big[\E[F^2(X_{[0,T]})|\mathscr{F}_{t_1}]\log \E[F^2(X_{[0,T]})|\mathscr{F}_{t_1}]\Big]\\
      &\qquad\qquad\qquad\qquad\leq 2\int_{t_1}^{t_2}\l(1+\frac{1}{2}\int_t^T\tilde{k}(s)
        \e^{-{\textstyle\int_t^s} (\frac{1}{2} k_1(r)-c(r))\vd r}\vd s\r)\\
      &\qquad\qquad\qquad\qquad\qquad\times\l(\E|\hat{D}_tF|_t^2+\frac{1}{2}\int_t^T\tilde{k}(s)
        \e^{-{\textstyle\int_t^s} (\frac{1}{2} k_1(r)+c(r))\vd r}\E|\hat{D}_sF|_s^2\vd s\r)\vd t;
  \end{align*}
\item for any $F\in \mathscr{F}C_{0,T}^{\infty}$, any continuous
  function $c$ on $[0,T]$, and any $t_1<t_2$ in $[0,T]$,
  \begin{align*}
    \E\Big[\E[F(X_{[0,T]})|\mathscr{F}_{t_2}]^2\Big]&-\E\Big[\E[F(X_{[0,T]})|\mathscr{F}_{t_1}]^2\Big]\\
 &\leq \int_{t_1}^{t_2}\l(1+\frac{1}{2}\int_t^T\tilde{k}(s)\e^{-{\textstyle\int_t^s} (\frac{1}{2} k_1(r)-c(r))\vd r}\vd s\r)\\
 &\quad \ \ \times\l(\E|\hat{D}_tF|_t^2+\frac{1}{2}\int_t^T\tilde{k}(s)\e^{-{\textstyle\int_t^s} (\frac{1}{2} k_1(r)+c(r))\vd r}\E|\hat{D}_sF|_s^2\vd s\r)\vd t.
  \end{align*}
\end{enumerate}
\end{proposition}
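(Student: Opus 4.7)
The plan is to mirror the proof of Proposition~\ref{lem1}, lifted to the time-dependent setting. As there, the starting point is the package of identities relating $\nabla^t_x\,\E^{(x,t)}F$ to the damped gradient $\tilde D_tF$, together with the incremental log-Sobolev and Poincar\'e inequalities on $[t_1,t_2]$ formulated with $\tilde D$; the evolving-manifold versions of these ingredients are developed in the authors' companion paper \cite{Ch-Th:2016a} and I would invoke them directly. Given these, the content of (i)$\Rightarrow$(ii)--(v) reduces to a pathwise comparison between $\tilde D_tF$ and the modified gradient $\hat D_\cdot F$.

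The key computation is the differential identity
\begin{equation*}
\frac{\vd}{\vd u}\Bigl(\e^{\frac{1}{4}\int_t^{u}(k_1+k_2)(r)\vd r}Q_{t,u}\Bigr)=\frac{1}{2}\e^{\frac{1}{4}\int_t^{u}(k_1+k_2)(r)\vd r}Q_{t,u}\Bigl(\tfrac{k_1+k_2}{2}(u)\,\id-\mathcal{R}^Z_{\partr{t}{u}}\Bigr),
\end{equation*}
which, combined with the bound $\|Q_{t,s}\|\leq \e^{-\frac{1}{2}\int_t^{s}k_1(r)\vd r}$ coming from the lower pinch and the operator norm estimate $\|\tfrac{k_1+k_2}{2}(u)\id-\mathcal R^Z_{\partr{t}{u}}\|\leq \tilde k(u)$, yields after writing $\tilde D_tF=\hat D_tF+\int_t^T(\cdots)\,\vd u$ the pathwise estimate
\begin{equation*}
|\tilde D_tF|_t\leq |\hat D_tF|_t+\frac{1}{2}\int_t^{T}\tilde k(s)\,\e^{-\frac{1}{2}\int_t^{s}k_1(r)\vd r}|\hat D_sF|_s\,\vd s,\qquad \tilde k=\tfrac{k_2-k_1}{2}.
\end{equation*}
Introducing a free continuous weight $c$ and applying Cauchy--Schwarz then produces the precise kernel appearing in (iii); feeding this estimate into the damped-gradient Bismut formula and into the incremental log-Sobolev and Poincar\'e inequalities yields (ii), (iv) and (v), exactly as in part~(a) of the proof of Proposition~\ref{lem1}, with all constants upgraded to continuous functions of time.

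For the converse (iii)$\Rightarrow$(i), I would follow the strategy of Naber \cite{Naber} and part~(b) of Proposition~\ref{lem1}: test (iii) against $F(X_{[0,T]}^x)=f(X_{t+h}^x)$ and $F(X_{[0,T]}^x)=f(x)-\tfrac{1}{2}f(X_{t+h}^x)$, and expand as $h\downarrow 0$. The first functional, via the evolving-manifold version of \cite[Theorem~2.2.4]{Wbook2}, yields the lower bound $\mathcal R^Z_t\geq k_1(t)$; the second, after cancelling first-order contributions and matching the $\mathrm O(h^2)$ coefficients, gives the upper bound $\mathcal R^Z_t\leq k_2(t)$ through \cite[Lemma~2.5]{Ch-Th:2016a}. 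The forward direction is essentially bookkeeping once the damped-to-modified gradient comparison is in hand, so the main obstacle lies in this converse short-time expansion: keeping careful track of the $\partial_t g_t$ contribution and confirming that it assembles into precisely the correction $-\partial_t g_t$ appearing in $\mathcal R^Z_t$ requires the evolving-manifold Bochner calculus developed in \cite{Ch-Th:2016a}, which is where the technical effort is concentrated.
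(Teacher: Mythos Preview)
Your proposal is correct and follows essentially the same route as the paper: the forward direction via the differential identity for $\e^{\frac12\int_t^s\bar k}Q_{t,s}$ (with $\bar k=\tfrac{k_1+k_2}{2}$), the bound $\|Q_{t,s}\|\leq\e^{-\frac12\int_t^sk_1}$, and Cauchy--Schwarz with weight $c$, and the converse via the two Naber-type test functionals. The only minor discrepancy is bibliographic: the paper invokes \cite[Theorem~4.3]{Ch1} (not \cite{Ch-Th:2016a}) for the foundational damped-gradient identities and the incremental log-Sobolev/Poincar\'e inequalities in the evolving setting, reserving \cite{Ch-Th:2016a} for the short-time expansion in the converse step.
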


\begin{remark}\label{rem1}
  When $k_1=k_2=0$, $c=0$ and $L=\frac{1}{2}\Delta$, the inequalities
  in Proposition \ref{lem2} are proved in \cite{Naber} to characterize
  solutions of the Ricci flow.
\end{remark}

\begin{proof}[Proof of Propostion \ref{lem2}] By \cite[Theorem
  4.3]{Ch1} we know that for $F\in \mathscr{F}C_{0,T}^{\infty}$,
  one has
  \begin{align*}
    \big|\nabla_{x}^t\,\E^{(x,t)} (F(X_{[0,T]}))\big|_t\leq \E^{(x,t)}|\tilde{D}_tF|_t,
  \end{align*}
  and
  \begin{align*}
    \E&\left[\E[F^2(X_{[0,T]})|\mathscr{F}_{t_2}]\log \E[F^2(X_{[0,T]})|\mathscr{F}_{t_2}]\right]\\
      &-\E\left[\E[F^2(X_{[0,T]})|\mathscr{F}_{t_1}]\log \E[F^2(X_{[0,T]})|\mathscr{F}_{t_1}]\right]
        \leq 2\int_{t_1}^{t_2}\E|\tilde{D}_sF|_s^2\,\vd s.
  \end{align*}
  Analogously, by a similar discussion as in the proof of
  \cite[Theorem 4.3]{Ch1}, we have
$$\E\left[\E[F(X_{[0,T]})|\mathscr{F}_{t_2}]^2\right]-\E\left[\E[F(X_{[0,T]})|\mathscr{F}_{t_2}]^2\right]\\
\leq \int_{t_1}^{t_2}\E|\tilde{D}_sF|_s^2\,\vd s.$$
Hence it suffices again to estimate $|\tilde{D}_tF|_t$.

Defining
$$\bar{k}=\frac{k_1+k_2}{2}\quad\text{and}\quad\tilde{k}=\frac{k_2-k_1}{2},$$
 recall
$$\hat{D}_tF(X_{[0,T]})=\sum_{i=1}^n\1_{\{t\leq
  t_i\}}\,\e^{-\frac{1}{2}{\textstyle\int_t^{t_i}}\bar{k}(r)\vd
  r}\,\partrinv t{t_i}\,\nabla^{t_i}_if(X_{t_1},\ldots,X_{t_n}).$$
Then, we have
\begin{align}\label{eq2}
  \tilde{D}_tF&=\hat{D}_tF+\sum_{i=1}^{n}\1_{\{t\leq t_i\}}\,\left(\tilde{Q}_{t,t_i}-\rm id\right)\e^{-\frac{1}{2}{\textstyle\int_t^{t_i}}\bar{k}(r)\vd r}\partrinv t{t_i}\,{\nabla}^{t_i}_i f(X_{t_1},\ldots,X_{t_n})\notag\\
              &=\hat{D}_tF-\frac{1}{2}\int_t^T Q_{t,s}\,\left(\mathscr{R}^Z_{\partr ts}-\bar{k}(s)\id\right)\,\partrinv ts\hat{D}_sF\,\vd s
\end{align}
where
$\tilde{Q}_{t,s}=\e^{\frac{1}{2}{\textstyle\int_t^s}\bar{k}(r)\vd
  r}{Q}_{t,s}$.
Using similar arguments as in the proof of Proposition \ref{lem1}, we
obtain ``$\text{(i)}\Rightarrow\text{(ii)--(v)}$''.

Conversely, to prove ``$\text{(ii)--(v)}\Rightarrow\text{(i)}$'', the
essential part is to prove $\text{(iii)}\Rightarrow\text{(i)}$. The
trick is again to use the test functionals $F(X_{[0,T]})=f(X_t)$ and
$F(X_{[0,T]})=f(X_s)-\frac{1}{2}f(X_t)$. We refer the reader to
\cite{Naber, Ch-Th:2016a} for detailed calculations.
\end{proof}

\begin{proof}[Proof of Theorem \ref{Fol-dd}]For convenience of the reader, we first recall that for $F\in \mathscr{F}C_{0,T}^{\infty}$,
  one has
$$\E[F^2\log F^2]-\E[F^2]\log \E[F^2]\leq 2\int_0^T\E|\tilde{D}_sF|_s^2\,\vd s$$
and
$$\E\big[F-\E[F]\big]^2\leq \int_0^T\E|\tilde{D}_sF|_s^2\,\vd s.$$
Hence it suffices to estimate $\int_0^T\E|\tilde{D}_sF|_s^2\,\vd s$.
Under condition \eqref{curvature-cond}, we obtain the bounds
$$\l|\mathscr{R}_t^Z(X,X)\r|\leq \big(|k_1|\vee |k_2|\big)(t)\,|X|_t^2$$
and
$$\mathscr{R}_t^Z(X,X)\geq k_1(t)|X|_t^2$$
for all $X\in TM$. Then
\begin{align*}
  \tilde{D}_tF&=D_tF+\sum_{i=1}^{n}\l(\int_t^{t_i}\frac{\vd Q_{t,s}}{\,\vd s}\vd s\r)\,\partrinv t{t_i}\,\nabla^{t_i}_i f(X_{t_1},\ldots, X_{t_n})\\
              &=D_tF-\frac{1}{2}\int_t^{T}Q_{t,s}\,\mathscr{R}^Z_{\partr ts}\,\partrinv ts \,D_sF\,\vd s
\end{align*}
which implies that
\begin{align*}
  |\tilde{D}_tF|_t\leq |D_tF|_t+\frac{1}{2}\int_t^T (|k_1|\vee|k_2|)(s) \e^{-\frac{1}{2}{\textstyle\int_t^s}k_1(r)\vd r}|{D}_sF|_s\,\vd s.
\end{align*}
Using a similar argument as in the proof of Theorem \ref{dd}, we arrive at
\begin{align}\label{eq3}
  \int_0^T|\tilde{D}_tF|_t^2\,\vd t\leq \int_0^T\tilde{\Lambda}^c(t,T,k_1,|k_1|\vee |k_2|)|D_tF|_t^2\,\vd t.
\end{align}
On the other hand, by Proposition \ref{lem2}, we have
\begin{align}\label{esti-1}
  |\tilde{D}_tF|_t^2\leq \l(1+\frac{1}{2}\int_t^T\tilde{k}(s)\e^{-{\textstyle\int_t^s} (\frac{1}{2} k_1-c)(r)\vd r}\vd s\r)
  \l(\E|\hat{D}_tF|_t^2+\frac{1}{2}\int_t^T\tilde{k}(s)\e^{-{\textstyle\int_t^s} (\frac{1}{2}k_1+c)(r)\vd r}\E|\hat{D}_sF|_s^2\,\vd s\r).
\end{align}
Moreover, for $|\hat{D}_tF|_t$, it is easy to see that
\begin{align}\label{esti-2}
  |\hat{D}_tF|_t&=\Big|{D}_tF+\sum_{i=1}^{n}\1_{\{t\leq t_i\}}\,\Big(\e^{-\frac{1}{2}{\textstyle\int_t^{t_i}}\bar{k}(r)\vd r}-1\Big)\partrinv t{t_i}\,\nabla^{t_i}_i f\Big|_t\notag\\
                &\leq|{D}_tF|_t+\frac{1}{2}\int_t^T|\bar{k}(s)|\e^{-\frac{1}{2}{\textstyle\int_t^s}\bar{k}(r)\vd r}|{D}_sF|_s\,\vd s.
\end{align}
Combining this with Eq.~\eqref{esti-1}, and using similar arguments as
in the proof of Theorem \ref{dd}, we obtain
$$\int_0^T|\tilde{D}_tF|_t^2\,\vd t\leq \tilde{S}\l(t,T,k_1,\frac{k_2-k_1}{2}\r)\int_0^T\tilde{\Lambda}^c\l(t,T,\frac{k_2+k_1}{2},\frac{|k_2+k_1|}{2}\r)|D_tF|_t^2\,\vd t.$$
From this and by means of Eq.~\eqref{eq3}, the proof is directly completed.
\end{proof}

The following result is a direct consequence of Theorem \ref{Fol-dd}.
\begin{theorem}\label{th3}
  Assume that there exist two continuous functions $k_1$ and $k_2$
  such that
$$k_1(t)\, |X|_t^2\leq\mathscr{R}_t^Z(X,X)\leq k_2(t)\,|X|_t^2,\quad t\in [0,T]$$
for any vector field $X$ on $M$. Then
$$\gap(\mathcal{L})^{-1}\leq \tilde{H}(T,k_1,k_2).
$$
For the special case that $k_1$ and $k_2$ are constants, the
following asymptotics hold as $T\rightarrow 0$:
\begin{itemize}
\item [(i)] for $k_1\geq0$,
 $$\gap(\mathscr{L})^{-1}\leq 1+\frac{k_2}{2}T+\frac{1}{8}\l(k_2^2-\frac{(7k_1+k_2)(k_1+k_2)k_2}{6(3k_1+k_2)}\r)T^2+{\rm o}(T^2);$$
\item [(ii)] for $k_1+k_2\geq 0$ and $k_1<0$,
$$\gap(\mathscr{L})^{-1}\leq 1+\frac{k_2}{2}T+\frac{1}{8}\l(k_2^2+\frac{2k_1^2-k_2^2-5k_1k_2}{6}\r)T^2+{\rm o}(T^2);$$
\item [(iii)] for $k_1+k_2<0$,
$$\gap(\mathscr{L})^{-1}\leq 1-\frac{k_1}{2}T+\frac{1}{8}\l(k_1^2+\frac{3k_1^2+k_2^2}{4}\r)T^2+{\rm o}(T^2).$$
\end{itemize}
\end{theorem}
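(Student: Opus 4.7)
The proof should be a direct corollary of Theorem~\ref{Fol-dd}, together with a reduction of the constant-coefficient case to the calculations already carried out in the proof of Theorem~\ref{th2}. Here is how I would organize it.

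First, I would derive the spectral gap bound from the Poincar\'e inequality in Theorem~\ref{Fol-dd}(ii). By definition, $\gap(\mathcal{L})$ is characterized variationally through the Dirichlet form $\mathcal{E}(F,F) = \E \int_0^T |D_tF|_t^2 \,\vd t$ associated with the intrinsic gradient $D_0$. Applying Theorem~\ref{Fol-dd}(ii) to any cylindrical $F \in \mathscr{F}C_{0,T}^{\infty}$ and using density of $\mathscr{F}C_{0,T}^{\infty}$ in the domain of~$\mathcal{E}$, one immediately obtains
\begin{equation*}
\E\big[F - \E F\big]^2 \leq \tilde{H}(T,k_1,k_2)\,\mathcal{E}(F,F),
\end{equation*}
which by the Rayleigh quotient characterization yields $\gap(\mathcal{L})^{-1} \leq \tilde{H}(T,k_1,k_2)$.

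For the short-time asymptotics under constant $k_1, k_2$, I would exploit the observation (already noted after the definition of $\tilde{\Lambda}^c$) that when $K_1,K_2,c$ are all constants one has $\tilde{\Lambda}^c(t,T,K_1,K_2) = \Lambda^c(t,T,K_1,K_2)$, and consequently $\tilde{S}(T,K_1,K_2) = S(T,K_1,K_2)$. Therefore in the constant case $\tilde{H}(T,k_1,k_2) = H(T,k_1,k_2)$, and the estimate reduces to
\begin{equation*}
\gap(\mathcal{L})^{-1} \leq H(T,k_1,k_2) \leq C\l(T,k_1,\frac{k_2-k_1}{2}\r)\,C\l(T,\frac{k_1+k_2}{2},\frac{|k_1+k_2|}{2}\r),
\end{equation*}
using the inequality $S(T,K_1,K_2) \leq C(T,K_1,K_2)$ from the proof of Theorem~\ref{th1}.

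Finally, the three claimed asymptotic expansions are obtained by expanding each factor $C(T,\cdot,\cdot)$ to second order in $T$ exactly as was done in the proof of Theorem~\ref{th2}, distinguishing the three regimes $k_1 \geq 0$, $k_1 < 0 \leq k_1 + k_2$, and $k_1 + k_2 < 0$. In each regime the $T^2$-coefficient produced by the product $C(T,k_1,(k_2-k_1)/2)\,C(T,(k_1+k_2)/2,|k_1+k_2|/2)$ was already shown in that proof to be smaller than the one coming from $C(T,k_1,|k_1|\vee|k_2|)$, so the formulas in (i)--(iii) are simply inherited verbatim.

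There is no serious obstacle: the key point is matching the definitions so that the time-dependent functional $\tilde{\Lambda}^c$ collapses to its constant-coefficient counterpart $\Lambda^c$, after which the argument is purely an application of Theorem~\ref{Fol-dd} and a bookkeeping reference to the computations in the proofs of Theorems~\ref{th1} and~\ref{th2}.
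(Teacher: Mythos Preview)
Your proposal is correct and matches the paper's approach: the paper itself gives no proof beyond declaring the result ``a direct consequence of Theorem~\ref{Fol-dd}'', and your sketch fills in precisely the expected details---the Poincar\'e inequality from Theorem~\ref{Fol-dd}(ii) yields the general bound, and the constant-coefficient asymptotics reduce to the computations already done for Theorems~\ref{th1} and~\ref{th2}.

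One small overstatement: from the identity $\tilde{\Lambda}^c=\Lambda^c$ for constant $K_1,K_2,c$ you conclude $\tilde{S}=S$ and hence $\tilde{H}=H$. Strictly speaking this only gives $\tilde{S}\leq S$ (since the infimum in $\tilde{S}$ runs over the larger class $C([0,T])\supset\R$), and therefore $\tilde{H}\leq H$. But that inequality is exactly what you need, so the argument goes through unchanged; just replace the equalities by inequalities in the right direction.
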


\proof[Acknowledgements]This work has been supported by Fonds National
de la Recherche Luxembourg (Open project O14/7628746 GEOMREV). The
first named author acknowledges support by NSFC (Grant No.~A011002)
and Zhejiang Provincial Natural Science Foundation of China (Grant
No. LQ16A010009).

\bibliographystyle{amsplain}%

\bibliography{CT16b}

\def\cprime{$'$} \def\cprime{$'$} \def\cprime{$'$}
\providecommand{\bysame}{\leavevmode\hbox to3em{\hrulefill}\thinspace}
\providecommand{\MR}{\relax\ifhmode\unskip\space\fi MR }
\providecommand{\MRhref}[2]{%
  \href{http://www.ams.org/mathscinet-getitem?mr=#1}{#2}
}
\providecommand{\href}[2]{#2}
\begin{thebibliography}{10}

\bibitem{AE}
Shigeki Aida and David Elworthy, \emph{Differential calculus on path and loop
  spaces. {I}. {L}ogarithmic {S}obolev inequalities on path spaces}, C. R.
  Acad. Sci. Paris S\'er. I Math. \textbf{321} (1995), no.~1, 97--102.
  \MR{1340091}

\bibitem{ACT}
Marc Arnaudon, Kolehe~Abdoulaye Coulibaly, and Anton Thalmaier, \emph{Brownian
  motion with respect to a metric depending on time: definition, existence and
  applications to {R}icci flow}, C. R. Math. Acad. Sci. Paris \textbf{346}
  (2008), no.~13-14, 773--778. \MR{2427080}

\bibitem{Bismut}
Jean-Michel Bismut, \emph{Large deviations and the {M}alliavin calculus},
  Progress in Mathematics, vol.~45, Birkh\"auser Boston, Inc., Boston, MA,
  1984. \MR{755001}

\bibitem{CW94}
Mu~Fa Chen and Feng~Yu Wang, \emph{Application of coupling method to the first
  eigenvalue on manifold}, Sci. China Ser. A \textbf{37} (1994), no.~1, 1--14.
  \MR{1308707}

\bibitem{CW97}
Mufa Chen and Fengyu Wang, \emph{General formula for lower bound of the first
  eigenvalue on {R}iemannian manifolds}, Sci. China Ser. A \textbf{40} (1997),
  no.~4, 384--394. \MR{1450586}

\bibitem{CCM}
Xin Chen, Li-Juan Cheng, and Jing Mao, \emph{A probabilistic method for
  gradient estimates of some geometric flows}, Stochastic Process. Appl.
  \textbf{125} (2015), no.~6, 2295--2315. \MR{3322865}

\bibitem{Ch1}
Li-Juan Cheng, \emph{An integration by parts formula on path space over
  manifolds carrying geometric flow}, Sci. China Math. \textbf{58} (2015),
  no.~7, 1511--1522. \MR{3353987}

\bibitem{Ch2}
\bysame, \emph{Transportation-cost inequalities on path spaces over manifolds
  carrying geometric flows}, to appear in: Bull. Sci. Math. (2015).

\bibitem{Ch-Th:2016a}
Li-Juan Cheng and Anton Thalmaier, \emph{Characterization of pinched {R}icci
  curvature by functional inequalities}, Preprint (2016).

\bibitem{CGY}
F.~R.~K. Chung, A.~Grigor'yan, and S.-T. Yau, \emph{Eigenvalues and diameters
  for manifolds and graphs}, Tsing {H}ua Lectures on Geometry \& Analysis
  ({H}sinchu, 1990--1991), Int. Press, Cambridge, MA, 1997, pp.~79--105.
  \MR{1482032}

\bibitem{D92}
Bruce~K. Driver, \emph{A {C}ameron-{M}artin type quasi-invariance theorem for
  {B}rownian motion on a compact {R}iemannian manifold}, J. Funct. Anal.
  \textbf{110} (1992), no.~2, 272--376. \MR{1194990}

\bibitem{Elworthy:1988}
David Elworthy, \emph{Geometric aspects of diffusions on manifolds}, \'{E}cole
  d'\'{E}t\'e de {P}robabilit\'es de {S}aint-{F}lour {XV}--{XVII}, 1985--87,
  Lecture Notes in Math., vol. 1362, Springer, Berlin, 1988, pp.~277--425.
  \MR{983375}

\bibitem{ELL}
K.~D. Elworthy, Y.~Le~Jan, and Xue-Mei Li, \emph{On the geometry of diffusion
  operators and stochastic flows}, Lecture Notes in Mathematics, vol. 1720,
  Springer-Verlag, Berlin, 1999. \MR{1735806}

\bibitem{EL}
K.~D. Elworthy and X.-M. Li, \emph{Formulae for the derivatives of heat
  semigroups}, J. Funct. Anal. \textbf{125} (1994), no.~1, 252--286.
  \MR{1297021}

\bibitem{FM93}
Shizan Fang and Paul Malliavin, \emph{Stochastic analysis on the path space of
  a {R}iemannian manifold. {I}. {M}arkovian stochastic calculus}, J. Funct.
  Anal. \textbf{118} (1993), no.~1, 249--274. \MR{1245604}

\bibitem{FW15}
Shizan Fang and Bo~Wu, \emph{Remarks on spectral gaps on {R}iemannian path
  space}, arXiv:1508.07657v1 (2015).

\bibitem{HN}
Robert Haslhofer and Aaron Naber, \emph{Characterizations of the {R}icci flow},
  to appear in: J. Eur. Math. Soc. (2016).

\bibitem{HN1}
\bysame, \emph{Ricci curvature and {B}ochner formulas for martingales},
  arXiv:1608.04371 (2016).

\bibitem{H97}
Elton~P. Hsu, \emph{Logarithmic {S}obolev inequalities on path spaces over
  {R}iemannian manifolds}, Comm. Math. Phys. \textbf{189} (1997), no.~1, 9--16.
  \MR{1478528}

\bibitem{Hsu02}
\bysame, \emph{Multiplicative functional for the heat equation on manifolds
  with boundary}, Michigan Math. J. \textbf{50} (2002), no.~2, 351--367.
  \MR{1914069}

\bibitem{Ku3}
Kazumasa Kuwada, \emph{Convergence of time-inhomogeneous geodesic random walks
  and its application to coupling methods}, Ann. Probab. \textbf{40} (2012),
  no.~5, 1945--1979. \MR{3025706}

\bibitem{KR}
Kazumasa Kuwada and Robert Philipowski, \emph{Coupling of {B}rownian motions
  and {P}erelman's {$\mathscr L$}-functional}, J. Funct. Anal. \textbf{260}
  (2011), no.~9, 2742--2766. \MR{2772350}

\bibitem{Ku}
\bysame, \emph{Non-explosion of diffusion processes on manifolds with
  time-dependent metric}, Math. Z. \textbf{268} (2011), no.~3-4, 979--991.
  \MR{2818739}

\bibitem{Naber}
Aaron Naber, \emph{Characterizations of bounded {R}icci curvature on smooth and
  nonsmooth spaces}, arXiv:1306.6512v4 (2015).

\bibitem{Wbook2}
Feng-Yu Wang, \emph{Analysis for diffusion processes on {R}iemannian
  manifolds}, Advanced Series on Statistical Science \& Applied Probability,
  18, World Scientific Publishing Co. Pte. Ltd., Hackensack, NJ, 2014.
  \MR{3154951}

\bibitem{WW}
Feng-Yu Wang and Bo~Wu, \emph{Pointwise characterizations of curvature and
  second fundamental form on {R}iemannian manifolds}, arXiv:1605.02447v4
  (2016).

\end{thebibliography}

\end{document}